\def\op{\operatorname}
\def\mmod{\kern-1pt\operatorname{-mod}}
\newtheorem{Thm}{Theorem}[section]
\newtheorem{Lem}[Thm]{Lemma}
\newtheorem{Cor}[Thm]{Corollary}
\newtheorem{Prop}[Thm]{Proposition}
\newtheorem{Rem}[Thm]{Remark}
\begin{document}
\renewcommand{\baselinestretch}{1.0}
\setlength{\parindent}{20pt} \baselineskip18pt

\centerline{\large \bf The Decomposition of Permutation Module}
\centerline{\large \bf for Infinite Chevalley Groups}

\bigskip

\centerline{Xiaoyu Chen, Junbin Dong*}

\begin{abstract}
Let ${\bf G}$ be a connected reductive group defined over $\mathbb{F}_q$, the finite field with $q$ elements. Let ${\bf B}$ be an Borel subgroup defined over $\mathbb{F}_q$. In this paper, we completely determine the composition factors of the induced module $\mathbb{M}(\op{tr})=\Bbbk{\bf G}\otimes_{\Bbbk{\bf B}}\op{tr}$  ($\op{tr}$ is the trivial ${\bf B}$-module) for any field $\Bbbk$.
\end{abstract}

\section{Introduction}
The representations of reductive algebraic groups is an interesting and fundamental topic. It has deep connections to other areas of mathematics, for example, algebraic geometry and number theory. The earlier attentions to this topic concentrated on the rational representations of algebraic groups and the representations of finite groups of Lie type.   The  cohomology theory of flag varieties and Deligne-Lusztig varieties control the rational representations of algebraic groups and ordinary representations of finite groups of Lie type, respectively.

One important class of irreducible modules of a reductive group (resp. Lie algebra) comes from certain induced modules from an one-dimensional character of a Borel subgroup (resp. Borel subalgebra).
For the rational representations of algebraic groups and, the representations of Lie algebras in the BGG category $\mathcal{O}$, it was known that all irreducible modules are the simple quotients of Weyl modules and Verma modules, respectively. Moreover, the decomposition of  Weyl modules and Verma modules motivates the famous Lusztig's conjecture (cf. \cite{Lu1} and \cite{Lu2}) and  Kazhdan-Lusztig conjecture (cf. \cite{KL}), respectively. For the representations of finite groups of Lie type in defining characteristic, such induced modules have been deeply investigated. For example, Carter and Lusztig classified simple modules via certain homomorphisms between such induced modules (cf.~\cite{CL}). Moreover, \cite{Jan} and \cite{Pil} indicated that the decomposition of such induced modules is closely related to the decomposition of Weyl modules.

Despite the fruitful results above, little was known about the abstract representations of algebraic groups. Assume that $\Bbbk$ is a field and let $\theta$ be a one-dimensional $\Bbbk{\bf B}$-module. It was observed in \cite{Xi} that the induced module $\mathbb{M}(\theta)=\Bbbk{\bf G}\otimes_{\Bbbk {\bf B}}\theta$ will give some new infinite dimensional abstract representations of ${\bf G}$. In particular, $\mathbb{M}(\op{tr})$ contains a submodule $\op{St}$ which is called infinite dimensional Steinberg module. The irreducibility of $\op{St}$ was proved in \cite{Xi} for the defining characteristic, and in \cite{Yang} for cross characteristic. Thus $\op{St}$ is irreducible for any field $\Bbbk$ which is surprising. For the nontrivial character $\theta$, it was proved in \cite{Chen1} that $\mathbb{M}(\theta)$ is irreducible if $\theta$ is strongly antidominant, and in \cite{Chen2} that a certain submodule of $\mathbb{M}(\theta)$ is irreducible when $\theta$ is antidominant.
Xi constructed in \cite{Xi} a filtration of $\mathbb{M}(\op{tr})=\Bbbk{\bf G}\otimes_{\Bbbk{\bf B}}\op{tr}$ whose subquotients are indexed by the subsets of simple reflections. The second author proved that some of these subquotients are irreducible when the groups are of type $A$ or rank $2$ in \cite{Dong} when $\op{char}\Bbbk\neq\op{char}\mathbb{F}_q$.
Later it was proved in \cite{CD} that all of these subquotients are irreducible and pairwise non-isomorphic if $\op{char}\Bbbk\neq\op{char}\mathbb{F}_q$.  This paper shows that the same result holds if $\op{char}\Bbbk=\op{char}\mathbb{F}_q$. Thus we completely determine the composition factors of $\mathbb{M}(\op{tr})$ for any field $\Bbbk$ (see Theorem \ref{main}). The constructions of these subquotients are uniform for all field, but the proof of irreducibility depends on the characteristic of $\Bbbk$. It would be interesting to find a characteristic free proof.

This paper is organized as follows: In Section 2 we recall some notations and basic facts about the structure of reductive groups. Section 3 recalls some basic properties of the  induced modules $\mathbb{M}(\op{tr})$. Section 4 gives the proof of the main theorem and in Section 5 we give another approach to prove our main theorem. Section 6 lists some open problems for further study.

\medskip
\noindent{\bf Acknowledgements.} Xiaoyu Chen is supported by National Natural Science Foundation of China (Grant No.~11501546). Junbin Dong is supported by National Natural Science Foundation of China (Grant No.~11671297).  The authors are grateful to Professor Nanhua Xi for his helpful suggestions and comments in
writing this paper.  Xiaoyu Chen thanks Professor Jianpan Wang and Naihong Hu for their advice and comments.
Junbin Dong thanks Professor Toshiaki Shoji and Qiang Fu for their helpful discussion and comments. Both authors would like to thank the referees for their careful reading and helpful suggestions.

\section{Reductive Groups with Frobenius Maps}
In this section, we recall the basic notations and facts about the structure of reductive groups. Let ${\bf G}$ be a connected reductive group defined over $\mathbb{F}_q$ with the standard Frobenius map $F$. Let ${\bf B}$ be an $F$-stable Borel subgroup, ${\bf T}$ be an $F$-stable maximal torus contained in ${\bf B}$, and ${\bf U}=R_u({\bf B})$ be the ($F$-stable) unipotent radical of ${\bf B}$. We denote $\Phi=\Phi({\bf G};{\bf T})$ the corresponding root system, and $\Phi^+$ (resp. $\Phi^-$) is the set of positive (resp. negative) roots determined by ${\bf B}$. Let $W=N_{\bf G}({\bf T})/{\bf T}$ be the corresponding Weyl group. For each $w\in W$, let $\dot{w}$ be a representative in $N_{\bf G}({\bf T})$.
One denotes $\Delta=\{\alpha_i\mid i\in I\}$ the set of simple roots and $S=\{s_i\mid i\in I\}$ the corresponding simple reflections in $W$.

For each $\alpha\in \Phi$, there is an unique unipotent subgroup
${\bf U}_\alpha$ of $G$ which is isomorphic to $\bar{\mathbb F}_q$ and is
stable under the conjugation by $\bf T$. For each $\alpha$, we fix an
isomorphism $\varepsilon_\alpha:\bar{\mathbb F}_q\to {\bf U}_\alpha$ so
that $t\varepsilon_\alpha(c)t^{-1}=\varepsilon_\alpha(\alpha(t)c)$.
For any $w\in W$, we set $$\Phi_w^-=\{\alpha \in \Phi^+ \mid w(\alpha)\in \Phi^- \}, \ \ \Phi_w^+=\{\alpha \in \Phi^+ \mid w(\alpha)\in \Phi^+ \}.$$
Now assume $\Phi_w^-=\{\beta_1,\beta_2,\dots,\beta_k\}$ and $\Phi_w^+=\{\gamma_1,\gamma_2,\dots,\gamma_l\}$
for a given $w\in W$ and, we denote
$${\bf U}_{w}= {\bf U}_{\beta_1}{\bf U}_{\beta_2}\dots {\bf U}_{\beta_k}\ \ \ \text{and}\ \ \  {\bf U}'_{w}= {\bf U}_{\gamma_1}{\bf U}_{\gamma_2}\dots {\bf U}_{\gamma_l}.$$
The following properties are well known (see \cite{Car}).

\smallskip

\noindent(a) For $w\in W$ and $\alpha\in \Phi$ we have $\dot{w}{\bf U}_\alpha \dot{w}^{-1}={\bf U}_{w(\alpha)}$;

\smallskip

\noindent(b) $ {\bf U}_w$ and ${\bf U}'_w$ are subgroups and $\dot{w}{\bf U}'_w\dot{w}^{-1} \subset {\bf U}$;

\smallskip

\noindent(c) The multiplication map ${\bf U}_w\times{\bf U}_w'\rightarrow{\bf U}$ is a bijection;

\smallskip

\noindent(d) Each $u \in {\bf U}_w$ is uniquely expressible in the form
$u=u_{\beta_1}u_{\beta_2}\dots u_{\beta_k}$ with $u_{\beta_i}\in  {\bf U}_{\beta_i}$;

\smallskip

\noindent(e) ({\it Commutator relations}) Given two positive roots $\alpha$ and $\beta$,  there exist a total ordering on $\Phi^+$ and integers $c^{mn}_{\alpha \beta}$ such that
$$[\varepsilon_\alpha(a),\varepsilon_\beta(b)]:=\varepsilon_\alpha(a)\varepsilon_\beta(b)\varepsilon_\alpha(a)^{-1}\varepsilon_\beta(b)^{-1}=
\underset{m,n>0}{\prod} \varepsilon_{m\alpha+n\beta}(c^{mn}_{\alpha \beta}a^mb^n)$$
for all $a,b\in \bar{\mathbb F}_q$, where the product is over all
integers $m,n>0$ such that $m\alpha+n\beta \in \Phi^{+}$, taken
according to the chosen ordering.

In the following sections, we will often use the properties of root subgroups. Except the properties above, we have the following  technical but useful lemma.

\begin{Lem}\label{Uw}
Let $s=s_{\alpha}$ be a simple reflection and $ws>w$. If ${\bf U}_w={({\bf U}_w)}^s$, then $ws=tw$ for some $t\in S$.
\end{Lem}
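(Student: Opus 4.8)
The plan is to first convert the group-theoretic hypothesis into a combinatorial one about the set $\Phi_w^-$, and then to derive the conclusion from the elementary closure properties of such sets. Since $ws>w$ is equivalent to $w(\alpha)\in\Phi^+$, we have $\alpha\notin\Phi_w^-$. By property (a), conjugation by $\dot{s}$ sends ${\bf U}_\gamma$ to ${\bf U}_{s(\gamma)}$, so $({\bf U}_w)^s=\prod_{\beta\in\Phi_w^-}{\bf U}_{s(\beta)}$, and every $s(\beta)$ again lies in $\Phi^+$ because $\alpha\notin\Phi_w^-$. By properties (c) and (d) the multiplication ${\bf U}_w\times{\bf U}'_w\to{\bf U}$ is a bijection, hence ${\bf U}_w\cap{\bf U}'_w=\{1\}$; consequently, for $\delta\in\Phi^+$ one has ${\bf U}_\delta\subseteq{\bf U}_w$ if and only if $\delta\in\Phi_w^-$ (if $\delta\in\Phi_w^+$ then ${\bf U}_\delta\subseteq{\bf U}'_w$, which meets ${\bf U}_w$ trivially, so ${\bf U}_\delta\not\subseteq{\bf U}_w$). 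From ${\bf U}_w=({\bf U}_w)^s$ we get ${\bf U}_{s(\beta)}\subseteq{\bf U}_w$, hence $s(\beta)\in\Phi_w^-$, for every $\beta\in\Phi_w^-$; since $s$ is injective and $\Phi_w^-$ is finite this forces $s(\Phi_w^-)=\Phi_w^-$.

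Next I would compute $\Phi_{ws}^-$ from this. We have $(ws)(\alpha)=w(-\alpha)=-w(\alpha)\in\Phi^-$, so $\alpha\in\Phi_{ws}^-$; and for $\gamma\in\Phi^+\setminus\{\alpha\}$ one has $s(\gamma)\in\Phi^+\setminus\{\alpha\}$ and $(ws)(\gamma)\in\Phi^-$ iff $s(\gamma)\in\Phi_w^-$. Combining this with $s(\Phi_w^-)=\Phi_w^-$ and $\alpha\notin\Phi_w^-$ yields $\Phi_{ws}^-=\Phi_w^-\sqcup\{\alpha\}$, that is, $\Phi_w^-=\Phi_{ws}^-\setminus\{\alpha\}$. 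I will also use the elementary remark that $\Phi_w^+=\Phi^+\setminus\Phi_w^-$ is closed under addition: if $w$ carries two positive roots to positive roots it carries their sum, when a root, to a positive root.

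The heart of the argument is to show $w(\alpha)\in\Delta$; granting this, $t:=wsw^{-1}=s_{w(\alpha)}\in S$ and $ws=tw$. Suppose instead $\beta:=w(\alpha)\in\Phi^+\setminus\Delta$. A standard fact on root systems then provides a simple root $\alpha_j$ with $\beta-\alpha_j\in\Phi^+$. Put $v:=ws$ and note the key identity $v^{-1}(\beta)=s\,w^{-1}(w\alpha)=s(\alpha)=-\alpha$. Applying $v^{-1}$ to $\beta=\alpha_j+(\beta-\alpha_j)$ gives $v^{-1}(\alpha_j)+v^{-1}(\beta-\alpha_j)=-\alpha$, so these two roots are not both positive; and they are not both negative, for otherwise $-v^{-1}(\alpha_j)$ and $-v^{-1}(\beta-\alpha_j)$ would be two positive roots whose sum is $\alpha$, impossible since $\alpha$ is simple. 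Hence exactly one of $\alpha_j,\ \beta-\alpha_j$, call it $\rho$, satisfies $v^{-1}(\rho)\in\Phi^+$, while the other, $\tau$, satisfies $v^{-1}(\tau)\in\Phi^-$. Set $\mu:=v^{-1}(\rho)\in\Phi^+$. Then $\mu+\alpha=v^{-1}(\rho)-v^{-1}(\beta)=v^{-1}(\rho-\beta)=-v^{-1}(\tau)\in\Phi^+$, while $v(\mu)=\rho\in\Phi^+$ and $v(\mu+\alpha)=\rho-\beta=-\tau\in\Phi^-$, so $\mu\in\Phi_v^+$ and $\mu+\alpha\in\Phi_v^-\setminus\{\alpha\}=\Phi_w^-$. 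But $\mu\in\Phi_v^+\subseteq\Phi_w^+$ and $\alpha\in\Phi_w^+$ with $\mu+\alpha\in\Phi^+$, so closedness of $\Phi_w^+$ forces $\mu+\alpha\in\Phi_w^+$, contradicting $\mu+\alpha\in\Phi_w^-$. Therefore $w(\alpha)\in\Delta$, completing the proof.

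The main obstacle is the third paragraph: one must convert the non-simplicity of $w(\alpha)$ into a concrete violation of the "co-closedness" of $\Phi_w^-$. The mechanism is the choice of the decomposition $\beta=\alpha_j+(\beta-\alpha_j)$ combined with the identity $v^{-1}(\beta)=-\alpha$, which manufactures the offending pair $\mu,\mu+\alpha$; the only non-formal input is the standard fact that a simple root is not a sum of two positive roots (used twice). The rest is bookkeeping with the root-subgroup axioms (a), (c), (d).
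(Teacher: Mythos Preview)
Your proof is correct. The first two paragraphs (translating ${\bf U}_w=({\bf U}_w)^s$ into $s(\Phi_w^-)=\Phi_w^-$ and deducing $\Phi_{ws}^-=\Phi_w^-\sqcup\{\alpha\}$) match the paper's opening moves exactly. The divergence is in the final step.

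The paper finishes by a length count: it observes that since $s$ permutes $\Phi_w^-$, it also permutes $\Phi_w^+\setminus\{\alpha\}$ (as $s$ permutes $\Phi^+\setminus\{\alpha\}$), and therefore the reflection $wsw^{-1}$ sends every positive root $w(\Phi_w^+)\cup(-w(\Phi_w^-))$ to a positive root, with the single possible exception of $w(\alpha)$. Hence $\ell(wsw^{-1})\le 1$, and since $wsw^{-1}\ne 1$ one gets $wsw^{-1}\in S$. This is short and uses only the length formula $\ell(x)=|\Phi_x^-|$.

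Your route is instead to prove $w(\alpha)\in\Delta$ directly: assuming $\beta=w(\alpha)$ non-simple, you peel off a simple summand $\alpha_j$, use $v^{-1}(\beta)=-\alpha$ to force exactly one of $v^{-1}(\alpha_j),v^{-1}(\beta-\alpha_j)$ to be positive, and then manufacture a root $\mu\in\Phi_w^+$ with $\mu+\alpha\in\Phi_w^-$, violating the additive closure of $\Phi_w^+$. This is more elementary in that it avoids invoking the length formula, working purely with the closure property of $\Phi_w^+$ and the indecomposability of simple roots; the price is a slightly longer case analysis. Both arguments are clean, but the paper's version is the more economical one-liner once the permutation $s(\Phi_w^-)=\Phi_w^-$ is in hand.
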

\begin{proof}
Let $\Phi_w^- = \{\alpha_1,\alpha_2,\dots,\alpha_m\}$.
Since ${\bf U}_{ws}={\bf U}_{\alpha}{({\bf U}_w)}^s={\bf U}_{\alpha}{\bf U}_w$, then we have
$$\Phi_{ws}^- = \{\alpha, \alpha_1,\alpha_2,\dots, \alpha_m\}.$$
Let $\Phi_w^+=\Phi^+\backslash \Phi_w^- =\{\beta_1=\alpha, \beta_2, \dots, \beta_l\}$. Denote by $\alpha'_i=w(\alpha_i)\in \Phi^-$ and $\beta'_i=w(\beta_i)\in \Phi^+$, then we have
$$\Phi^+=\{-\alpha'_1,\ -\alpha'_2,\ \dots,\ -\alpha'_m,\ \beta'_1,\ \beta'_2,\ \dots,\ \beta'_l\}.$$
Since ${\bf U}_w={({\bf U}_w)}^s$, there is a permutation $\sigma$ of $\{1,2,\dots, m\}$ such that $s(\alpha_i)=\alpha_{\sigma(i)}$. Therefore we have
$$wsw^{-1}(-\alpha'_i)=ws(-\alpha_i)=w(-\alpha_{\sigma(i)})=-\alpha'_{\sigma(i)}.$$
Similarly, there is a permutation $\tau$ of $\{2,3,\cdots,l\}$ such that
$wsw^{-1}(\beta'_j)=\beta'_{\tau(j)}$ for $j=2,3,\dots,l$.

The above discussion implies $\ell(wsw^{-1})\leq 1$.
But $wsw^{-1}\neq1$ and hence $wsw^{-1}=t\in S$ which completes the proof.
\end{proof}

For $J\subset I$, let $W_J$ be the standard parabolic subgroup of $W$ and assume that $w_J$ is the longest element in $W_J$.
For $w\in W$, set $\mathscr{R}(w)=\{s\in S\mid ws<w\}$ and denote
$$
\aligned
W^J &\ =\{x\in W\mid x~\op{has~minimal~length~in}~xW_J\};\\
Y^J &\ =\{w\in W^J\mid \mathscr{R}(ww_J)=J\}.
\endaligned
$$

\begin{Cor}\label{UwJ}
Let $s\in S$ and $w\in Y^J$. If $sw\in Y^J$ and $sw>w$, then ${\bf U}_{w_Jw^{-1}}\neq{({\bf U}_{w_Jw^{-1}})}^s$.
\end{Cor}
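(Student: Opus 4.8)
The plan is to derive Corollary \ref{UwJ} from Lemma \ref{Uw} by a contrapositive argument. Suppose, for contradiction, that ${\bf U}_{w_Jw^{-1}} = {({\bf U}_{w_Jw^{-1}})}^s$. Write $v = w_Jw^{-1}$. The hypothesis $sw > w$ translates, after taking inverses, into a length statement about $w^{-1}$ on the right by $s$, and since $w \in Y^J \subset W^J$ we have $\ell(w_Jw^{-1}) = \ell(w_J) - \ell(w^{-1})$ — this is the standard fact that $w_J w^{-1}$ is reduced as a product when $w^{-1}$ is a prefix of $w_J$ (equivalently, $w \in W^J$ forces $w^{-1} \in {}^JW$, so $w^{-1}$ lies below $w_J$ in the appropriate sense). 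First I would carefully check that $vs > v$, i.e. that right multiplication by $s$ increases the length of $v = w_Jw^{-1}$; this follows because $\ell(w^{-1}s) < \ell(w^{-1})$ would be needed for $vs < v$, but $sw > w$ gives $\ell(w^{-1}s) = \ell(sw)^{-1}$... more precisely $sw>w \iff \ell(sw) = \ell(w)+1 \iff \ell(w^{-1}s) = \ell(w^{-1})+1$, and combined with $w^{-1} \leq w_J$ one gets $\ell(w_J w^{-1} s) = \ell(w_J) - \ell(w^{-1} s) = \ell(v) - 1 < \ell(v)$. Wait — this needs care about whether $w^{-1}s$ is still $\leq w_J$; since $sw \in Y^J$ as well, $sw \in W^J$, so $(sw)^{-1} = w^{-1}s \in {}^JW$ and indeed $w^{-1}s \leq w_J$, giving $vs < v$. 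So in fact $s$ is in the right descent set of $v$, not the other way around; I would then apply Lemma \ref{Uw} to $v' = vs$, which satisfies $v's = v > v'$...

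Let me restructure: set $v' = w_Jw^{-1}s$ and $t' = s$. From $sw \in Y^J$, $sw \in W^J$ and the reducedness $\ell(w_J(sw)^{-1}) = \ell(w_J) - \ell(sw)$ shows $\ell(v') = \ell(w_J) - \ell(w) - 1$, while $\ell(v) = \ell(w_J) - \ell(w)$; hence $v = v's' > v'$ where $s' = s$. If we had ${\bf U}_{v'} = {({\bf U}_{v'})}^{s'}$ — which, using ${\bf U}_{vs}$ versus ${\bf U}_v$ and the relation ${({\bf U}_w)}^s = {\bf U}_w$ iff $\alpha \notin \Phi_w^-$ being permuted appropriately, should be equivalent to the hypothesis ${\bf U}_v = {({\bf U}_v)}^s$ up to the single root $\pm v(\alpha)$ — then Lemma \ref{Uw} would give $v's' = t''v'$ for some $t'' \in S$, i.e. $w_Jw^{-1} = t''\,w_Jw^{-1}s$, so $w_J w^{-1} s = t'' w_J w^{-1}$. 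Reading this as $\mathscr{R}$-information: the right descent sets of $w_J w^{-1}$ and its left-translate differ by conjugation, and one extracts that $\mathscr{R}(w_J w^{-1} \cdot (w^{-1})^{-1}) = \mathscr{R}(w_J) $ is constrained — but the condition $w \in Y^J$ says precisely $\mathscr{R}(w_J w \cdot {}) $... here I need to convert back: $w_J w^{-1}$ with $w \in Y^J$ means $\mathscr{R}((w_Jw^{-1})^{-1} w_J) = \mathscr{R}(w w_J^{-1} w_J) = \mathscr{R}(w)$, hmm, I'd recheck against the definition $Y^J = \{w \in W^J : \mathscr{R}(ww_J) = J\}$.

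The cleanest route, which I would actually write up, is: assume ${\bf U}_v = {({\bf U}_v)}^s$ with $v = w_Jw^{-1}$; show first $vs < v$ (equivalently $v^{-1}$ has $s$ on the... ) — actually since ${({\bf U}_v)}^s = {\bf U}_v$ and $s$ simple, Lemma \ref{Uw} needs the hypothesis $vs > v$, so I should instead apply it to the pair $(v, s)$ only after confirming $vs > v$; if instead $vs < v$ I apply the lemma to $w_0$-type dual or argue directly. Given the constraints $sw > w$ and $sw \in Y^J$, the correct sign is $vs < v$, so the hypothesis of Lemma \ref{Uw} is \emph{not} directly met, and I would instead apply Lemma \ref{Uw} with $w$ replaced by $v' = vs$ and note $v = v's > v'$; the equality ${\bf U}_{v'} = {({\bf U}_{v'})}^s$ follows from ${\bf U}_v = {({\bf U}_v)}^s$ together with properties (a)–(d) describing how ${\bf U}_{vs}$ relates to ${\bf U}_v$ via the root $v(\alpha)$. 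Lemma \ref{Uw} then yields $v's = t v'$ for some $t \in S$, i.e. $w_Jw^{-1} = t\,w_Jw^{-1}s$. Multiplying out and using $\mathscr{R}(w_J) = J$ together with the defining property $\mathscr{R}(ww_J) = J$ of $Y^J$, together with $sw \in Y^J$, I expect to reach the contradiction that $w$ and $sw$ cannot both satisfy the $Y^J$-condition — or that $\ell$ is miscounted.

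\textbf{Main obstacle.} The genuine difficulty is purely combinatorial bookkeeping in the Weyl group: correctly tracking which of $vs > v$ or $vs < v$ holds under the hypotheses, verifying that ${\bf U}_v = {({\bf U}_v)}^s$ transfers to the shifted element $vs$, and then squeezing the relation $w_Jw^{-1} = t\,w_Jw^{-1}s$ against the two membership conditions $w \in Y^J$ and $sw \in Y^J$ to produce an actual contradiction. None of the individual steps is deep, but the signs and the passage between $\mathscr{R}(\cdot w_J) = J$ and descent sets of $w_Jw^{-1}$ are where an error would hide, so I would write those conversions out explicitly rather than by hand-waving.
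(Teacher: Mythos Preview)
Your overall strategy---contrapositive plus Lemma~\ref{Uw}---is exactly the paper's, but a sign error sends you on an unnecessary detour. The formula $\ell(w_Jw^{-1}) = \ell(w_J) - \ell(w^{-1})$ is wrong: $w \in W^J$ does \emph{not} make $w^{-1}$ a prefix of $w_J$ (that would force $w \in W_J$). What $w \in W^J$ gives is $\ell(ww_J) = \ell(w) + \ell(w_J)$, hence $\ell(v) = \ell(w_Jw^{-1}) = \ell(w_J) + \ell(w)$. Applying the same to $sw \in W^J$ yields $\ell(vs) = \ell(w_J(sw)^{-1}) = \ell(w_J) + \ell(sw) = \ell(v) + 1$, so in fact $vs > v$, not $vs < v$. (One can also see this intrinsically: the assumption $({\bf U}_v)^s = {\bf U}_v$ already forces $\alpha \notin \Phi_v^-$, because $s(\alpha) = -\alpha$ is negative; hence $vs > v$.)

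With $vs > v$ in hand, Lemma~\ref{Uw} applies \emph{directly} to $v = w_Jw^{-1}$ and produces $w_Jw^{-1}s = r\,w_Jw^{-1}$ for some $r \in S$; there is no need to pass to $v' = vs$, and your claim that $({\bf U}_{v'})^s = {\bf U}_{v'}$ would follow from $({\bf U}_v)^s = {\bf U}_v$ is in any case unjustified (the symmetric difference of $\Phi_v^-$ and $\Phi_{vs}^-$ is $\{\alpha\}$, which $s$ sends outside $\Phi^+$). The paper's proof then finishes in one line, supplying the contradiction you left open: inverting gives $sww_J = ww_J\,r$. If $r \in \{s_j : j \in J\}$ then $ww_Jr < ww_J$ (since $\mathscr{R}(ww_J) = J$), contradicting $sww_J > ww_J$; if $r \notin \{s_j : j \in J\}$ then $r \in \mathscr{R}(ww_Jr) = \mathscr{R}(sww_J)$, contradicting $\mathscr{R}(sww_J) = J$ from $sw \in Y^J$.
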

\begin{proof}
Suppose ${\bf U}_{w_Jw^{-1}}={({\bf U}_{w_Jw^{-1}})}^s$, then by Lemma \ref{Uw} there exists a simple reflection $r\in S$ such that $w_Jw^{-1}s=rw_Jw^{-1}$ which is a contradiction to $sw\in Y^J$.
\end{proof}

\section{The Permutation Module}
In this section, we recall some basic facts in \cite{Xi} and \cite{CD}. Assume that $\Bbbk$ is a field.  Let $\mathbb{M}(\op{tr})=\Bbbk{\bf G}\otimes_{\Bbbk{\bf B}}\op{tr}$, where $\op{tr}$ is the trivial $\Bbbk{\bf B}$-module, and call it the {\it permutation module}. Let ${\bf 1}_{\op{tr}}$ be a nonzero element in $\op{tr}$. For convenience, we abbreviate $x\otimes{\bf 1}_{\op{tr}}\in\mathbb{M}(\op{tr})$ to $x{\bf 1}_{\op{tr}}$. Since ${\bf T}$ acts trivially on ${\bf 1}_{\op{tr}}$, the notation
$w1_{tr}=\dot{w}1_{tr}$ is well defined for any $w\in W$.
Using the Bruhat decomposition of ${\bf G}$, it is easy to see
$$\mathbb{M}(\op{tr})=\sum_{w\in W}\Bbbk {\bf U}_{w^{-1}}w{\bf 1}_{\op{tr}}.$$
Moreover, the set $\{u{w}{\bf 1}_{\op{tr}}\mid w\in W, u\in {\bf U}_{w^{-1}}\}$ forms a basis of $\mathbb{M}(\op{tr})$.

\begin{Rem}
\normalfont Let $G={\bf G}^F$ and $B={\bf B}^F$. Naturally, we have a ``finite version" of $\mathbb{M}(\op{tr})$, namely, $\Bbbk G{\bf 1}_{\op{tr}}$, which is isomorphic to the induced module $\op{Ind}_{B}^{G}1_B$, where $1_B$ is the trivial $\Bbbk B$-module. For $\Bbbk=\mathbb{C}$, the decomposition of $\op{Ind}_{B}^{G}1_B$ is closely related to the representation of $\mathcal{H}=\op{End}_{G}(\op{Ind}_{B}^{G}1_B)$ which is known as the Hecke algebra. For $\Bbbk=\bar{\mathbb{F}}_q$, it is known that $\op{Ind}_{B}^{G}1_B$ decomposes into a direct sum of indecomposable modules, each with
simple socle, and there is a bijection between the direct summands and the subsets of $I$ (cf. \cite[Proposition 4.5]{YY}). However, we have
$$\op{End}_{\Bbbk{\bf G}}(\mathbb{M}(\op{tr}))\simeq\Bbbk$$
for any field $\Bbbk$, since it is clear that
$f({\bf 1}_{\op{tr}})\in \mathbb{M}(\op{tr})^{\bf U}=\Bbbk{\bf 1}_{\op{tr}}$. Therefore
the induced $\Bbbk{\bf G}$-module $\mathbb{M}(\op{tr})$ is indecomposable for any field $\Bbbk$.
\end{Rem}

For any $J\subset I$, let $W_J$  be the subgroup of $W$ generated by $s_i$ with $i\in J$. We set
$$\eta_J=\sum_{w\in W_J}(-1)^{\ell(w)}{w}{\bf 1}_{\op{tr}},$$
and let $\mathbb{M}(\op{tr})_J=\Bbbk{\bf G}\eta_J$. It was proved in \cite{Xi} that $\mathbb{M}(\op{tr})_J=\Bbbk {\bf U}W\eta_J$.
The following lemma is well known and very useful in our arguments later. The proof can be found in \cite[Proposition 2.3]{Xi} (see also \cite[Lemma 2.1]{CD}).

\begin{Lem}\label{xsty}
Let $u\in {\bf U}^*_{\alpha_i}= {\bf U}_{\alpha_i}\backslash\{1\}$ and $w\in W^J$. Then

\noindent $\op{(1)}$ There exist unique $x,y\in {\bf U}^*_{\alpha_i}$ and $t\in{\bf T}$ such that $\dot{s_i}u\dot{s_i}^{-1}=x\dot{s_i}ty$; Note that if we denote by $x=f_i(u)$, then $f_i$ is an isomorphism on ${\bf U}^*_{\alpha_i}$;

\noindent $\op{(2)}$ If $ww_J< s_iww_J$, then $\dot{s_i}u{w}\eta_J={s_i}{w}\eta_J$;

\noindent $\op{(3)}$ If $s_iw< w$, then $\dot{s_i}uw\eta_J=x{w}\eta_J$, where $x$ is defined in $\op{(1)}$.

\noindent $\op{(4)}$ If $s_iw> w$ and $s_iww_J< ww_J$, then $\dot{s_i}u{w}\eta_J=(x-1){w}\eta_J$, where $x$ is defined in $\op{(1)}$.
\end{Lem}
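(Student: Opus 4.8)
The plan is to reduce everything to rank-one computations inside the subgroups $\langle {\bf U}_{\alpha_i},{\bf U}_{-\alpha_i},{\bf T}\rangle$, together with three elementary facts about $\eta_J$ and one combinatorial lemma about $W^J$.

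\medskip
\noindent\textit{Part $(1)$.} The identity is internal to the rank-one subgroup $H_i=\langle {\bf U}_{\alpha_i},{\bf U}_{-\alpha_i},{\bf T}\rangle$, a homomorphic image of $\SL_2$. By property (a), $\dot{s_i}u\dot{s_i}^{-1}\in{\bf U}_{-\alpha_i}$, and the classical rank-one identity in $\SL_2$ (for $c\neq 0$, $\left(\begin{smallmatrix}1&0\\c&1\end{smallmatrix}\right)=\left(\begin{smallmatrix}1&c^{-1}\\0&1\end{smallmatrix}\right)\left(\begin{smallmatrix}0&-c^{-1}\\c&0\end{smallmatrix}\right)\left(\begin{smallmatrix}1&c^{-1}\\0&1\end{smallmatrix}\right)$), transported through $\varepsilon_{-\alpha_i}$, produces the required $x,t,y$; the same computation shows $x=y=f_i(u)$ and $y=t^{-1}u^{-1}t$. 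Uniqueness is the uniqueness of the Bruhat decomposition $H_i={\bf U}_{\alpha_i}\dot{s_i}{\bf T}{\bf U}_{\alpha_i}\sqcup{\bf T}{\bf U}_{\alpha_i}$. If $u=\varepsilon_{\alpha_i}(c)$ then $f_i(u)=\varepsilon_{\alpha_i}(-c^{-1})$ (up to the structure constant), so on ${\bf U}^*_{\alpha_i}$ the map $f_i$ is $c\mapsto -c^{-1}$, a morphism of varieties and an involution.

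\medskip
\noindent\textit{Three facts and a combinatorial lemma.} (A) $\dot{s_j}\eta_J=-\eta_J$ for $j\in J$, since left multiplication by $s_j$ permutes $W_J$ and changes length parity. (B) If $\gamma\in\Phi^+$ and $W_J\gamma\subseteq\Phi^+$, then ${\bf U}_\gamma\eta_J=\eta_J$: for $v\in W_J$ the element $\dot v^{-1}{\bf U}_\gamma\dot v={\bf U}_{v^{-1}(\gamma)}$ lies in ${\bf U}\subseteq{\bf B}$ and so fixes $\dot v{\bf 1}_{\op{tr}}$. (C) ${\bf T}$ fixes every $\dot w\eta_J$ (conjugate ${\bf T}$ inside and use that it fixes $\eta_J$). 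Combinatorial lemma, for $w\in W^J$: (i) if $ww_J<s_iww_J$ then $s_iw\in W^J$, $s_iw>w$, and $W_J\cdot w^{-1}(\alpha_i)\subseteq\Phi^+$; (ii) if $s_iw<w$ then $s_iw\in W^J$ and $W_J\cdot(-w^{-1}(\alpha_i))\subseteq\Phi^+$; (iii) if $s_iw>w$ and $s_iww_J<ww_J$ then $w^{-1}(\alpha_i)=\alpha_j$ for some $j\in J$, equivalently $s_iw=ws_j$. In (i) and (ii) one checks membership in $W^J$ by a short length argument and proves the orbit claim via the length criterion $v z(\alpha_i)>0\iff\ell(v z s_i)>\ell(v z)$ applied to $z=w^{-1}$ (resp. $(s_iw)^{-1}$), using that $w^{-1}\in{}^JW$; for (iii) one uses that $w_J$ permutes $\Phi^+$ outside the span of $\{\alpha_j:j\in J\}$, so that the hypothesis forces $w^{-1}(\alpha_i)$ to be a positive root in that span with $w_J w^{-1}(\alpha_i)<0$, and then simplicity of $\alpha_i$ with $w(\alpha_k)>0$ for $k\in J$ forces it to equal some $\alpha_j$.

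\medskip
\noindent\textit{Parts $(2)$ and $(3)$.} For $(2)$: $\dot{s_i}u\dot w\eta_J=\dot{s_i}\dot w(\dot w^{-1}u\dot w)\eta_J$ and $\dot w^{-1}u\dot w\in{\bf U}_{w^{-1}(\alpha_i)}$, which fixes $\eta_J$ by (i)+(B), so the expression is $\dot{s_i}\dot w\eta_J=s_iw\eta_J$ by (C). For $(3)$: rewrite $(1)$ as $\dot{s_i}u=x\dot{s_i}ty\dot{s_i}$, so $\dot{s_i}u\dot w\eta_J=x\dot{s_i}ty\dot{s_i}\dot w\eta_J$; by (ii)+(B) the conjugate $\dot w^{-1}\dot{s_i}^{-1}y\dot{s_i}\dot w\in{\bf U}_{-w^{-1}(\alpha_i)}$ fixes $\eta_J$, and $\dot{s_i}t\dot{s_i}\in{\bf T}$ fixes $\dot w\eta_J$ by (C), collapsing the right-hand side to $x\dot w\eta_J=xw\eta_J$.

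\medskip
\noindent\textit{Part $(4)$, the delicate case.} By (iii) we have $s_iw=ws_j$ with $w^{-1}(\alpha_i)=\alpha_j$, $j\in J$; now $W_J\alpha_j\not\subseteq\Phi^+$, so (B) is unavailable and a new ingredient is needed, namely the rank-one identity inside $W_J$:
$$\dot{s_j}\,v\,\eta_J=(f_j(v)-1)\,\eta_J\qquad(j\in J,\ v\in{\bf U}^*_{\alpha_j}).$$
I would prove this directly: split $W_J=A\sqcup s_jA$ with $A=\{v'\in W_J\mid s_jv'>v'\}$, so that $\eta_J=\sum_{v'\in A}(-1)^{\ell(v')}(1-\dot{s_j})\dot{v'}{\bf 1}_{\op{tr}}$; compute $\dot{s_j}v\eta_J$ term by term, using that $v$ and $f_j(v)$ are absorbed by any $\dot{v'}{\bf 1}_{\op{tr}}$ with $v'\in A$ and rewriting $\dot{s_j}v\dot{s_j}^{-1}\in{\bf U}_{-\alpha_j}$ via the identity of $(1)$; reindexing by $w_1=s_jv'$ reassembles the sum as $f_j(v)\eta_J-\eta_J$. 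Granting this, $(4)$ follows by the bookkeeping used in $(3)$: using $(1)$, $\dot{s_i}\dot w=\dot w\dot{s_j}t_0$, and Facts A and C one rewrites $\dot{s_i}u\dot w\eta_J$ as $-x\,\dot w\,\dot{s_j}\,v_3\,\eta_J$ for an explicit $v_3\in{\bf U}^*_{\alpha_j}$, then applies the key identity to obtain $x\dot w\eta_J-x\dot w f_j(v_3)\eta_J$, and the second term equals $\dot w\eta_J$ once one verifies $\dot w f_j(v_3)\dot w^{-1}=x^{-1}$.

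\medskip
The main obstacle is exactly this last verification. It requires carefully propagating the structure constants of the root isomorphisms $\varepsilon_\alpha$, the choices of representatives $\dot{s_i},\dot{s_j},\dot w$, and the torus factors produced by $(1)$ through all the conjugations, and checking that they conspire to the coefficient $-1$ (equivalently $f_j(v_3)=f_j(v_0)^{-1}$ with $v_0=t_0\dot w^{-1}u\dot w t_0^{-1}$, using $f_j(v^{-1})=f_j(v)^{-1}$ and $y=t^{-1}u^{-1}t$ from $(1)$). Nothing here is conceptually hard, but essentially all the care goes into it; the clean way to organize it is to fix once and for all Chevalley-type representatives so that $\dot s\,{\bf U}_\alpha\,\dot s^{-1}={\bf U}_{s(\alpha)}$ holds with controlled signs, reducing the whole bookkeeping to a single rank-one identity.
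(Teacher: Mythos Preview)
The paper does not actually prove this lemma; it records it as well known and refers to \cite[Proposition~2.3]{Xi} (and \cite[Lemma~2.1]{CD}). So there is no in-paper argument to compare against, and your write-up is more detailed than anything appearing here.

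Your approach is essentially the standard one and is correct in outline. Parts (1)--(3) are fine as written, your three facts (A)--(C) are exactly what is needed, and the combinatorial trichotomy (i)--(iii) for $w\in W^J$ is correct (the argument for (iii) via simplicity of $\alpha_i$ and $w(\alpha_k)>0$ for $k\in J$ is the right one). Your key rank-one identity $\dot{s_j}v\,\eta_J=(f_j(v)-1)\eta_J$ for $j\in J$ is also correct and is indeed the heart of (4).

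Two corrections concerning (4). First, the parenthetical claim $y=t^{-1}u^{-1}t$ in your discussion of (1) is false: in $\SL_2$ with $u=\varepsilon_{\alpha_i}(c)$ one gets $x=y=\varepsilon_{\alpha_i}(-c^{-1})$, whereas $t^{-1}u^{-1}t=\varepsilon_{\alpha_i}(-c^3)$. So the bookkeeping route you sketch for the final verification will not close as stated. Second, and more importantly, the ``main obstacle'' you flag is illusory if you reorganize slightly: do \emph{not} unfold (1) at the outset. Write instead
\[
\dot{s_i}u\dot w=(\dot{s_i}\dot w)(\dot w^{-1}u\dot w)=\dot w\dot{s_j}t_0v_0,\qquad v_0:=\dot w^{-1}u\dot w\in{\bf U}^*_{\alpha_j},
\]
so that $\dot{s_i}u\dot w\eta_J=\dot w\,\dot{s_j}v_0'\,\eta_J$ with $v_0':=t_0v_0t_0^{-1}$, and your rank-one identity gives $\dot w(f_j(v_0')-1)\eta_J$. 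It remains only to check $\dot w\,f_j(v_0')\,\dot w^{-1}=f_i(u)=x$. Conjugate the defining relation $\dot{s_j}v_0'\dot{s_j}^{-1}=f_j(v_0')\dot{s_j}t'y'$ by $\dot w$: using $\dot w\dot{s_j}\dot w^{-1}=\dot{s_i}\tau$ (with $\tau=\dot w t_0^{-1}\dot w^{-1}\in{\bf T}$) and $\dot w v_0'\dot w^{-1}=\tau^{-1}u\tau$, the left side becomes $\dot{s_i}u\dot{s_i}^{-1}$, while the right side visibly lies in ${\bf U}^*_{\alpha_i}\,\dot{s_i}\,{\bf T}\,{\bf U}^*_{\alpha_i}$ with first factor $\dot w f_j(v_0')\dot w^{-1}$. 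The uniqueness in (1) then forces $\dot w f_j(v_0')\dot w^{-1}=x$, with no structure constants to chase.
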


Since $\mathbb{M}(\op{tr})_J\supsetneq\mathbb{M}(\op{tr})_K$ if $J\subsetneq K$.
Following \cite[2.6]{Xi}, we define
$$E_J=\mathbb{M}(\op{tr})_J/\mathbb{M}(\op{tr})_J',$$
where $\mathbb{M}(\op{tr})_J'$ is the sum of all $\mathbb{M}(\op{tr})_K$ with $J\subsetneq K$. The following lemma was proved in \cite{Xi}.

\begin{Lem}[{\cite[Proposition 2.7]{Xi}}]\label{EJ}
If $J$ and $K$ are different subsets of $I$, then $E_J$ and $E_K$ are not isomorphic as
$\Bbbk{\bf G}$-modules.
\end{Lem}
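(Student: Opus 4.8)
The plan is to attach to each $E_L$ a $\Bbbk{\bf G}$-module invariant that recovers the subset $L$; then any $\Bbbk{\bf G}$-isomorphism $E_J\cong E_K$ must force $J=K$. Write $\bar\eta_J\in E_J$ for the image of $\eta_J$. Besides $\dot w\,\eta_J=(-1)^{\ell(w)}\eta_J$ for $w\in W_J$, one checks that $\eta_J$ is fixed by ${\bf T}$ and by $R_u(P_J)$, where $P_J\supseteq{\bf B}$ is the standard parabolic subgroup of ${\bf G}$ with Weyl group $W_J$ and $\Phi_J$ denotes the root subsystem spanned by $\{\alpha_i\mid i\in J\}$: indeed for $\alpha\in\Phi^+\setminus\Phi_J$ and $w\in W_J$ we have $w^{-1}\alpha\in\Phi^+$ (since $\Phi_{w^{-1}}^-\subseteq\Phi_J^+$), so by property (a), $\dot w^{-1}{\bf U}_\alpha\dot w={\bf U}_{w^{-1}\alpha}\subseteq{\bf U}$ fixes ${\bf 1}_{\op{tr}}$, whence ${\bf U}_\alpha$ fixes each $\dot w\,{\bf 1}_{\op{tr}}$ and thus $\eta_J$. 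Consequently, setting $G_J=\langle{\bf U}_{\pm\alpha}\mid\alpha\in\Phi_J\rangle$, the $G_J$-submodule $\Bbbk G_J{\bf 1}_{\op{tr}}$ of $\mathbb{M}(\op{tr})$ is the permutation module of $G_J$ (as $G_J\cap{\bf B}$ is a Borel subgroup of $G_J$ and the basis elements match up), inside which $\eta_J=\sum_{w\in W_J}(-1)^{\ell(w)}\dot w\,{\bf 1}_{\op{tr}}$ is the Steinberg generator; hence $\Bbbk G_J\eta_J$ is the infinite Steinberg module $\op{St}_{G_J}$ of $G_J$, which is irreducible for every $\Bbbk$ by \cite{Xi} and \cite{Yang}. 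Since the quotient map $\mathbb{M}(\op{tr})_J\to E_J$ sends $\eta_J$ to $\bar\eta_J\ne0$, it is injective on the irreducible module $\Bbbk G_J\eta_J$, so $\op{St}_{G_J}$ embeds as a $\Bbbk G_J$-submodule of $\op{Res}_{G_J}E_J$.

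For a $\Bbbk{\bf G}$-module $M$ let $\mathcal{S}(M)=\{L\subseteq I\mid\op{St}_{G_L}\text{ embeds as a }\Bbbk G_L\text{-submodule of }\op{Res}_{G_L}M\}$; this is visibly an invariant of the isomorphism type of $M$. By the first paragraph $J\in\mathcal{S}(E_J)$, so it suffices to prove the single inclusion $\mathcal{S}(E_J)\subseteq\{L\mid L\subseteq J\}$: granting it, an isomorphism $E_J\cong E_K$ yields $J\in\mathcal{S}(E_J)=\mathcal{S}(E_K)\subseteq\{L\mid L\subseteq K\}$, so $J\subseteq K$, and symmetrically $K\subseteq J$, whence $J=K$. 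Equivalently, one must show: if $L\not\subseteq J$, then $\op{Hom}_{\Bbbk G_L}(\op{St}_{G_L},E_J)=0$.

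To establish this I would exploit the explicit model of $E_J$. Using $\mathbb{M}(\op{tr})_J=\Bbbk{\bf U}W\eta_J$ and repeated application of Lemma \ref{xsty}, $\mathbb{M}(\op{tr})_J$ is spanned by the vectors $u\,\dot w\,\eta_J$ with $w\in W^J$ and $u\in{\bf U}_{w^{-1}}$ (here one uses that $\eta_J$ is sign-isotypic for $W_J$, so only minimal coset representatives of $W/W_J$ occur), and modulo $\mathbb{M}(\op{tr})_J'$ one may further restrict to $w\in Y^J$; Lemma \ref{Uw} and Corollary \ref{UwJ} describe precisely which simple-reflection translates of a given $w$-stratum get absorbed into the larger submodules $\mathbb{M}(\op{tr})_K$. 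Against this ``triangular'' description I would analyse a hypothetical nonzero $\Bbbk G_L$-homomorphism $\varphi\colon\op{St}_{G_L}\to E_J$: its image is an irreducible $G_L$-submodule $\cong\op{St}_{G_L}$, hence generated by the image of the Steinberg generator of $\op{St}_{G_L}$, and that image is a nonzero vector of $E_J$ that is sign-isotypic for $W_L$ with the appropriate genericity under ${\bf U}\cap G_L$; pushing this vector through the spanning set above and through Lemma \ref{xsty}, one shows no such vector can exist unless $W_L\subseteq W_J$, i.e.\ $L\subseteq J$ --- the obstruction being, in the end, the defining condition $\mathscr{R}(ww_J)=J$ of $Y^J$ together with the mechanism of Corollary \ref{UwJ} governing re-entry into the submodules $\mathbb{M}(\op{tr})_K$. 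This last step is where I expect the real difficulty to lie, and also where the characteristic of $\Bbbk$ intervenes: when $\op{char}\Bbbk=\op{char}\mathbb{F}_q$ the sign character of $W_L$ and the invariants of unipotent subgroups are less rigid, so the bookkeeping with the commutator relations (e) has to be carried out with care to remain valid in that case. (As a cross-check valid in every characteristic, $\eta_J$ in fact lies in $\mathbb{M}(\op{tr})_J^{R_u(P_J)}$ and generates there a copy of the Steinberg module of the Levi subgroup $L_J$ of $P_J$, and the inclusion above amounts to saying that this ``Levi-Steinberg content'' of $E_J$ is carried only by the reductive subgroups sitting inside $L_J$.)
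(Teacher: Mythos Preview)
The paper does not give its own proof of this lemma: it is quoted from \cite[Proposition 2.7]{Xi} and simply cited. So there is no ``paper's proof'' to compare to here; I can only assess your proposal on its own merits and against the spirit of the simpler argument the paper implicitly uses elsewhere (see the proof of Lemma~\ref{socEJ}).

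Your proposal has a genuine gap. The first paragraph, showing that $\op{St}_{G_J}$ embeds as a $\Bbbk G_J$-submodule of $E_J$, is fine. But the crucial inclusion $\mathcal{S}(E_J)\subseteq\{L\mid L\subseteq J\}$ is not proved: you write ``To establish this I would exploit\ldots'' and then admit ``This last step is where I expect the real difficulty to lie.'' That is a plan, not an argument. Concretely, you would need to show that if $L\not\subseteq J$ then no nonzero vector of $E_J$ can be simultaneously $W_L$-sign-isotypic and appropriately generic under ${\bf U}\cap G_L$; the tools you name (the $Y^J$-basis and Lemma~\ref{xsty}) are relevant, but the actual verification is the entire content of the lemma and you have not carried it out.

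Independently of completeness, the route is more elaborate than necessary. The argument in the proof of Lemma~\ref{socEJ} points to a much lighter invariant: the element $C_J\in (E_J)^{\bf T}$ is nonzero and satisfies $\dot s_iC_J=-C_J$ if and only if $i\in J$, and ${\bf U}_{\alpha_i}C_J=C_J$ if and only if $i\notin J$. These transformation properties under the simple root subgroups and simple reflections already pin down $J$, and one checks directly from the $Y^J$-basis (Proposition~\ref{YJCJ}) that no nonzero ${\bf T}$-fixed vector of $E_J$ can satisfy the analogous conditions for a different subset $K$. This is presumably close to Xi's original argument and avoids the detour through Steinberg embeddings of all the Levi-type subgroups $G_L$.
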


We denote by $C_J$ the image of $\eta_J$ in $E_J$.  Combining \cite[Lemma 2.6]{Dong} and \cite[Lemma 2.7]{Dong} we see that
\begin{Prop}\label{YJCJ}
The set $\{u{w}C_J\mid w\in Y^J, u\in{\bf U}_{w_Jw^{-1}}\}$ forms a basis of $E_J$.
\end{Prop}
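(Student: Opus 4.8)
The plan is to produce the claimed basis in two stages: first exhibit a spanning set of $E_J$ of the stated form, then verify linear independence. For spanning, I would start from the known description $\mathbb{M}(\op{tr})_J = \Bbbk{\bf U}W\eta_J$ from \cite{Xi}, so that $E_J$ is spanned by the images $u w C_J$ with $w \in W$ and $u \in {\bf U}$. Using the factorization ${\bf U} = {\bf U}_{w^{-1}}{\bf U}'_{w^{-1}}$ from property (c) together with the root-subgroup conjugation relations (a), one reduces each such element to a combination in which $u$ ranges only over an appropriate subgroup of ${\bf U}$ associated to $w$. Then I would use Lemma \ref{xsty}: parts (2)–(4) show that whenever $w \notin W^J$, or more precisely whenever one can apply a simple reflection $s_i$ so that the length conditions in (2)–(4) hold, the element $u w C_J$ is rewritten in terms of shorter or more special $w$'s (and in case (4) picks up a lower-order term). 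Iterating, one pushes $w$ into the set $Y^J = \{w \in W^J \mid \mathscr{R}(ww_J) = J\}$; the point of the condition $\mathscr{R}(ww_J)=J$ is exactly that for $w \in Y^J$ no further reduction of the type in Lemma \ref{xsty}(2) is available. This also explains why the relevant unipotent group attached to $w$ is ${\bf U}_{w_J w^{-1}}$: conjugating $\eta_J$ (which is $W_J$-anti-invariant) past $\dot w$ converts the ${\bf U}$-action into an action by $\dot w {\bf U} \dot w^{-1}$, and the part of ${\bf U}$ that survives modulo the stabilizer of $C_J$ and modulo the larger submodules $\mathbb{M}(\op{tr})_K$ is precisely ${\bf U}_{w_J w^{-1}}$.

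For linear independence, I would pass to a convenient quotient or use the finite-level structure. One clean route: the dimension count. By Proposition parts of \cite{Dong} (Lemmas 2.6 and 2.7 quoted in the excerpt) the spanning argument already gives $\dim_k E_J \le |\{(w,u) : w \in Y^J,\ u \in {\bf U}_{w_Jw^{-1}}\}|$ at each finite sublevel, so it suffices to bound $\dim E_J$ from below by the same quantity. For the lower bound I would exhibit, for each pair $(w,u)$, a $\Bbbk{\bf G}$-module homomorphism (or merely a linear functional compatible with enough of the structure) separating $u w C_J$ from the span of the others; concretely, one can use the basis $\{u' w' {\bf 1}_{\op{tr}}\}$ of $\mathbb{M}(\op{tr})$ recalled above and track the ``leading term'' of $uwC_J$ under the projection $\mathbb{M}(\op{tr}) \to \mathbb{M}(\op{tr})/\mathbb{M}(\op{tr})'_J$, showing these leading terms are distinct basis elements of $\mathbb{M}(\op{tr})$ not lying in $\mathbb{M}(\op{tr})'_J$. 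Here Corollary \ref{UwJ} is the technical input that guarantees, for $w \in Y^J$, that applying $s$ with $sw \in Y^J$, $sw > w$ genuinely moves the ${\bf U}_{w_Jw^{-1}}$-coordinate, so the leading terms coming from different $w$ do not collide.

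The main obstacle I anticipate is the bookkeeping in the spanning step: showing that the reduction via Lemma \ref{xsty}(2)–(4) terminates with $w \in Y^J$ and, crucially, that the ``error terms'' produced by part (4) (the $(x-1)wC_J$ versus $xwC_J$ discrepancy) are themselves already of the desired form or lie in a filtration piece one can induct on. Organizing this as an induction on $\ell(w)$, with a secondary induction handling the passage from $W^J$ to $Y^J$, should work, but one must be careful that the unipotent element $u$ is simultaneously brought into ${\bf U}_{w_Jw^{-1}}$ — this is where properties (a)–(e), especially the commutator relations (e) for reordering root subgroups, are used to absorb the ${\bf U}'_{w^{-1}}$-part. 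The linear independence half, by contrast, should be comparatively routine once the leading-term map is set up, since it essentially reduces to the known fact (Proposition \ref{YJCJ}'s cited predecessors) that the alleged basis is at least a spanning set together with the injectivity statement packaged in Corollary \ref{UwJ}.
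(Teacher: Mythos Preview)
The paper does not supply its own proof of this proposition at all: the result is introduced with the sentence ``Combining \cite[Lemma 2.6]{Dong} and \cite[Lemma 2.7]{Dong} we see that'' and is simply imported from \cite{Dong}. So there is no in-paper argument to compare against; your proposal is an attempt to reconstruct what the cited reference presumably contains.

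As a reconstruction, your two-stage architecture (spanning by reduction, then linear independence via leading terms in the Bruhat basis of $\mathbb{M}(\op{tr})$) is the natural one and almost certainly matches \cite{Dong} in outline. Two points in your sketch are not quite aligned with the tools at hand, however. First, Lemma~\ref{xsty} describes the \emph{left} action of $\dot{s_i}$ on $u w\eta_J$, whereas membership in $Y^J$ is a \emph{right}-descent condition on $ww_J$; your sentence ``for $w\in Y^J$ no further reduction of the type in Lemma~\ref{xsty}(2) is available'' conflates the two sides. The honest mechanism forcing the index set down from $W^J$ to $Y^J$ is that for $w\in W^J\setminus Y^J$ the element $w\eta_J$ already lies in $\mathbb{M}(\op{tr})'_J$ (one exhibits it in the span of suitable $\eta_K$ with $K\supsetneq J$), so $wC_J=0$ in $E_J$; this is a separate combinatorial identity, not an iterated application of Lemma~\ref{xsty}. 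Second, your invocation of Corollary~\ref{UwJ} for linear independence is misplaced: in the paper that corollary is used only in the proof of Proposition~\ref{Induct} in Section~4, not for the basis statement here. Linear independence follows more directly by tracking, for each $uwC_J$ with $w\in Y^J$, the coefficient of the Bruhat basis vector $uww_J{\bf 1}_{\op{tr}}$ in a lift to $\mathbb{M}(\op{tr})_J$ and checking that no element of $\mathbb{M}(\op{tr})'_J$ contributes to those coordinates. With these two corrections your plan is sound; but again, the paper itself offers nothing beyond the citation.
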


For any subset $J\subset I$, the $\Bbbk{\bf G}$-modules $E_J$ is a subquotient of $\mathbb{M}(\op{tr})$.
We can also realize $E_J$ as a $\Bbbk{\bf G}$-submodule of a parabolic induced module.
For $K\subset I$, let ${\bf P}_K$ be the standard parabolic subgroup of ${\bf G}$ generated by ${\bf B}$ and $s_i$ with $i\in K$, and $\mathbb{M}_K=\Bbbk{\bf G}\otimes_{\Bbbk{\bf P}_K}\op{tr}_K$, where $\op{tr}_K$ is the trivial ${\bf P}_K$-module. Then $\mathbb{M}_K$ is the quotient $\Bbbk{\bf G}$-module of $\mathbb{M}(\op{tr})$. Let ${\bf 1}_K$ be a nonzero element in $\op{tr}_K$. For convenience, we abbreviate $x\otimes{\bf 1}_K\in \mathbb{M}_K$ to $x{\bf 1}_K$.
For $J\subset I$, we denote $J'=I\backslash J$.  Let $E_J'$ be the $\Bbbk{\bf G}$-submodule of $\mathbb{M}_{J'}$ generated by $D_J:=\sum_{w\in W_J}(-1)^{\ell(w)}{w}{\bf 1}_{J'}$.   Combining Proposition \ref{YJCJ} and \cite[Proposition 3.2]{CD}, we get the following
\begin{Prop}\label{basis}
For any $J\subset I$, the set $\{u{w}D_J\mid w\in Y^J, u\in{\bf U}_{w_Jw^{-1}}\}$ forms a basis of $E_J'$. In particular, $E_J'\cong E_J$ as $\Bbbk{\bf G}$-modules.
\end{Prop}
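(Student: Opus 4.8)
The plan is to realise the isomorphism explicitly through a quotient map. Since ${\bf B}\subseteq{\bf P}_{J'}$ there is a canonical $\Bbbk{\bf G}$-epimorphism $\pi\colon\mathbb{M}(\op{tr})\twoheadrightarrow\mathbb{M}_{J'}$ with $\pi(x{\bf 1}_{\op{tr}})=x{\bf 1}_{J'}$; in particular $\pi(\eta_J)=D_J$, so $\pi$ restricts to a $\Bbbk{\bf G}$-epimorphism $\mathbb{M}(\op{tr})_J=\Bbbk{\bf G}\eta_J\twoheadrightarrow\Bbbk{\bf G}D_J=E_J'$. The first step is to check that $\pi$ annihilates $\mathbb{M}(\op{tr})_J'$, which reduces to showing $\pi(\eta_K)=0$ whenever $J\subsetneq K$. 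For this, fix $i\in K\setminus J$ and write $W_K=\bigsqcup_{x}\{x,\,xs_i\}$ with $x$ ranging over the minimal length representatives of $W_K/W_{\{i\}}$ and $\ell(xs_i)=\ell(x)+1$, so that $\eta_K=\sum_{x}(-1)^{\ell(x)}(\dot x\,{\bf 1}_{\op{tr}}-\dot x\dot s_i\,{\bf 1}_{\op{tr}})$. Since $i\in J'$ we have $\dot s_i\in{\bf P}_{J'}$, so $\dot x\dot s_i\,{\bf 1}_{J'}=\dot x\,{\bf 1}_{J'}$ in $\mathbb{M}_{J'}$ and every summand of $\pi(\eta_K)$ vanishes. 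As $\mathbb{M}(\op{tr})_K=\Bbbk{\bf G}\eta_K$ and $\pi$ is $\Bbbk{\bf G}$-linear, $\pi(\mathbb{M}(\op{tr})_K)=0$, hence $\pi(\mathbb{M}(\op{tr})_J')=0$.

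Consequently $\pi$ induces a $\Bbbk{\bf G}$-epimorphism $\bar\pi\colon E_J=\mathbb{M}(\op{tr})_J/\mathbb{M}(\op{tr})_J'\twoheadrightarrow E_J'$ with $\bar\pi(C_J)=D_J$, and hence $\bar\pi(uwC_J)=uwD_J$ for all $w\in Y^J$ and $u\in{\bf U}_{w_Jw^{-1}}$. By Proposition \ref{YJCJ} the set $\{uwC_J\mid w\in Y^J,\ u\in{\bf U}_{w_Jw^{-1}}\}$ is a $\Bbbk$-basis of $E_J$, so its image $\{uwD_J\mid w\in Y^J,\ u\in{\bf U}_{w_Jw^{-1}}\}$ spans $E_J'$. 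Everything then comes down to proving that this spanning family is $\Bbbk$-linearly independent in $\mathbb{M}_{J'}$: once that is known, $\bar\pi$ takes a basis bijectively onto a basis, so it is the asserted $\Bbbk{\bf G}$-isomorphism $E_J\xrightarrow{\sim}E_J'$ and the displayed family is a basis of $E_J'$.

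This linear independence is precisely \cite[Proposition 3.2]{CD}, and it is the only genuine obstacle. The mechanism behind it is to expand $uwD_J=\sum_{v\in W_J}(-1)^{\ell(v)}uwv{\bf 1}_{J'}$ and rewrite each $uwv{\bf 1}_{J'}$ in the standard basis of $\mathbb{M}_{J'}$ coming from the (sharp) Bruhat decomposition ${\bf G}=\bigsqcup_{d\in W^{J'}}{\bf U}_{d^{-1}}\dot d{\bf P}_{J'}$. Since $w\in W^J$, one has $\ell(wv)=\ell(w)+\ell(v)$ for $v\in W_J$, so each $uwv{\bf 1}_{J'}$ equals a single standard basis vector. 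Taking $v=w_J$ and using $\mathscr{R}(ww_J)=J$ one gets $ww_J\in W^{J'}$; as $(ww_J)^{-1}=w_Jw^{-1}$, the term $(-1)^{\ell(w_J)}\,u\,\dot{ww_J}\,{\bf 1}_{J'}$ is a standard basis vector determined by the pair $(u,ww_J)$, and distinct $(u,w)$ give distinct such ``leading'' vectors. The substance of \cite[Proposition 3.2]{CD} is the combinatorial bookkeeping over the Bruhat strata of $\mathbb{M}_{J'}$ showing that these leading vectors are never cancelled by the remaining terms $uwv{\bf 1}_{J'}$ with $v\ne w_J$; granting that, the argument above yields the independence and hence the proposition.
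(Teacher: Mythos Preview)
Your argument is correct and is essentially the same route the paper indicates: the paper simply says ``Combining Proposition~\ref{YJCJ} and \cite[Proposition 3.2]{CD}'' and states the result, while you have spelled out the connecting map $\bar\pi\colon E_J\twoheadrightarrow E_J'$, verified that it kills $\mathbb{M}(\op{tr})_J'$, and then invoked the same two inputs. Your sketch of why the family $\{u\dot w D_J\}$ is linearly independent (isolating the leading basis vector $u\,\dot{ww_J}\,{\bf 1}_{J'}$ in the $W^{J'}$-Bruhat expansion) is also the mechanism behind \cite[Proposition 3.2]{CD}, so nothing new is needed beyond what the paper already cites.
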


\section{Composition factors of $\mathbb{M}(\op{tr})$ }

In this section we prove that $E_J$ is irreducible for any subset $J\subset I$. So we completely determines the composition factors of $\mathbb{M}(\op{tr})$ for any field $\Bbbk$.
The main theorem is the following
\begin{Thm}\label{main}
Let $\Bbbk$ be any field.  Then all the $\Bbbk{\bf G}$-modules $E_J$ $(J\subset I)$ are irreducible and pairwise non-isomorphic. In particular, $\mathbb{M}(\op{tr})$ has exactly $2^r$ composition factors, where $r$ is the rank of ${\bf G}$.
\end{Thm}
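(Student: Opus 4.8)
The plan is to show that each $E_J$ is irreducible by exhibiting, for every nonzero $v \in E_J$, a way to produce $C_J$ (a cyclic generator of $E_J$) from $v$ under the $\Bbbk{\bf G}$-action; combined with Lemma \ref{EJ}, which already gives pairwise non-isomorphism, this yields the theorem, and the count $2^r$ follows since the $E_J$ exhaust the subquotients of Xi's filtration. The key tool is the explicit basis $\{u w C_J \mid w \in Y^J,\, u \in {\bf U}_{w_J w^{-1}}\}$ from Proposition \ref{YJCJ}, together with the action formulas in Lemma \ref{xsty}. Since the case $\op{char}\Bbbk \neq \op{char}\mathbb{F}_q$ is already done in \cite{CD}, the real content is the defining-characteristic case, but I would aim to organize the argument so that the combinatorial skeleton is characteristic-free and only the final ``averaging'' step splits into cases.

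First I would set up a filtration (or grading) of $E_J$ by the length $\ell(w)$ of the indexing Weyl-group elements $w \in Y^J$, or more precisely by the Bruhat order, so that a general element $v$ has a well-defined ``top layer'' supported on certain maximal-length $w$'s. Writing $v = \sum_{w,u} c_{w,u}\, u w C_J$, I would pick $w$ maximal (in Bruhat order) among those with some $c_{w,u} \neq 0$. The goal of the first main step is to act by suitable elements of ${\bf U}$ and torus elements to ``clean up'' the coefficient function $u \mapsto c_{w,u}$ on the finite-dimensional-looking piece ${\bf U}_{w_J w^{-1}}$ — but note ${\bf U}_{w_J w^{-1}}$ is a group over $\bar{\mathbb{F}}_q$, hence infinite, so one must be careful: in fact only finitely many $u$ appear. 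Using the commutator relations (property (e)) and Lemma \ref{xsty}(1), conjugation/translation by root subgroup elements acts on this coefficient function in a controlled affine way, and I would use this to reduce to the case where $v$ has a single term $u_0 w C_J$, then translate by $u_0^{-1}$ to assume $v = w C_J$.

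The second main step is to climb down from $w C_J$ to $C_J$: since $w \in Y^J \subseteq W^J$, choose a reduced expression and apply $\dot{s_i}$'s using Lemma \ref{xsty}(2)–(4). The delicate point — and what I expect to be the main obstacle — is that when we apply $\dot{s_i} u$ with $s_i w < w$ we get $x w' C_J$ by part (3), which is fine, but when $s_i w > w$ while $s_i w w_J < w w_J$ we land on $(x-1) w C_J$ by part (4), and to actually descend in length we need to kill the ``$x$'' part, i.e. we need to be able to produce $w C_J$ itself (not $x w C_J$ for $x \neq 1$) — this is exactly where Corollary \ref{UwJ} and Lemma \ref{Uw} enter, controlling when the relevant root subgroup ${\bf U}_{w_J w^{-1}}$ is genuinely moved by $s_i$ so that summing (``integrating'') over a finite subgroup of ${\bf U}_{\alpha_i}$ isolates the constant term. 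In defining characteristic one cannot divide by $q$, so instead of averaging I would use a telescoping/difference argument: the map $v \mapsto \dot{s_i} \varepsilon_{\alpha_i}(c) v - v$ applied cleverly, exploiting that $f_i$ in Lemma \ref{xsty}(1) is a bijection of ${\bf U}_{\alpha_i}^*$, to extract $C_J$ after finitely many steps. The bookkeeping that this descent terminates and does not reintroduce higher-length terms is the technical heart; I would prove it by induction on $\ell(w)$ with the Bruhat-order filtration as the inductive scaffold, invoking Proposition \ref{basis} if it is cleaner to run the argument inside the parabolic induced module $\mathbb{M}_{J'}$ where the ${\bf P}_{J'}$-invariance of $D_J$ gives extra rigidity.
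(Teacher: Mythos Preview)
Your proposal has a genuine gap in the defining-characteristic case, and it is precisely at the two points you flag as ``main steps.''

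For the first step, you claim that by acting with root-subgroup elements and torus elements you can reduce an arbitrary nonzero $v=\sum_{w,u}c_{w,u}\,u w C_J$ to a single basis vector $u_0 w C_J$. But you give no mechanism for this in characteristic $p=\op{char}\mathbb{F}_q$. Torus conjugation rescales the root coordinates of $u$ and left translation by ${\bf U}$ shifts them, so you can move the finite support of $u\mapsto c_{w,u}$ around, but to \emph{isolate} a single term you would need some form of averaging or Fourier-type extraction over a finite additive group, and in characteristic $p$ such sums collapse to zero. The paper does not attempt this at all: instead it passes to a finite level $E_{J,q^a}=\Bbbk G_{q^a}C_J$, uses the fact that a $p$-group in characteristic $p$ always has nonzero invariants (\cite[Proposition 26]{Se}) to produce an element of the very special shape $\xi=\sum_{w}c_w\,\underline{U_{w_Jw^{-1},q^a}}\,wC_J$, and then separates the $w$'s by a trick you do not mention: choosing $b\neq a$ with $a\mid b$ and multiplying by coset sums $\underline{I}$ of $U_{\alpha,q^a}$ in $U_{\alpha,q^b}$, which either promotes a $q^a$-average to a $q^b$-average (when $\alpha\in\Phi_{w_Jw^{-1}}^-$) or multiplies by $q^{b-a}=0$ (when it is not). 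This height-ordered separation (Proposition~\ref{Seperate} and Lemma~\ref{key}) is the genuine idea, and nothing in your outline substitutes for it.

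For the second step, the descent in the paper is likewise carried out on the averaged elements $\underline{U_{w_Jw^{-1},q^a}}\,wC_J$, not on bare $wC_J$. Applying $\dot{s}$ produces a two-term expression, and one term is killed using the same $q^a$/$q^b$ coset trick together with the height argument encoded in property~($\clubsuit$) and Corollary~\ref{UwJ}; this is Proposition~\ref{Induct}. Your ``telescoping/difference'' suggestion $v\mapsto \dot{s_i}\varepsilon_{\alpha_i}(c)v-v$ does not obviously annihilate anything useful here, and you have not explained how it would. Finally, even once $\underline{U_{w_J,q^m}}C_J$ is in hand, extracting $C_J$ itself uses Steinberg's identity $\sum_{w\in W_J}(-1)^{\ell(w)}w\,\underline{U_{w_J,q^m}}C_J=\sum_{w\in W_J}q^{m\ell(w)}C_J=C_J$, another step specific to characteristic $p$ that your outline omits.
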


\begin{Rem}
\normalfont Theorem \ref{main} reflects a new phenomenon for infinite reductive groups. In other words, it does not hold when $\Bbbk{\bf G}$ is replaced by $\Bbbk G_{q^a}$.
When $\Bbbk=\mathbb{C}$, it is known that there is a bijection between the composition factors of $\Bbbk G_{q^a}{\bf 1}_{\op{tr}}$ and the composition factors of the regular module $\Bbbk W$ of $W$, which preserves multiplicities. But the number of composition factors of $\Bbbk W$ is not equal to $2^r$ in general.  When $\Bbbk=\bar{\mathbb{F}}_q$, let ${\bf G}=SL_3(\bar{\mathbb{F}}_q)$. Then Theorem \ref{main} says that $\mathbb{M}(\op{tr})$ has 4 composition factors. But it was shown in \cite{CL} (page 382) that $\Bbbk G_p{\bf 1}_{\op{tr}}$ has 6 composition factors, where $G_p= SL_3(\mathbb{F}_p)$.
\end{Rem}

Theorem \ref{main} was proved in \cite{CD} in the case $\op{char}\Bbbk\neq\op{char}\mathbb{F}_q$.
In this section we will prove Theorem \ref{main} in the case $\op{char}\Bbbk=\op{char}\mathbb{F}_q$. From here to the end of this section, we always assume that $\op{char}\Bbbk=\op{char}\mathbb{F}_q$.

For any finite subset $H$ of ${\bf G}$, let $\underline{H}:=\sum_{h\in H}h\in\Bbbk{\bf G}$ (this is a frequently used notation in the arguments below). It is clear that $\underline{H}\cdot\underline{H}=0$ if $H$ is a subgroup and $\op{char}\Bbbk$ divides $|H|$. For each $F$-stable subgroup ${\bf H}$ of ${\bf G}$, denote $H_{q^a}:={\bf H}^{F^a}$.

Although Theorem \ref{main} works for any field $\Bbbk$, the arguments in this paper are significantly different to that in the case $\op{char}\Bbbk\neq\op{char}\mathbb{F}_q$ in \cite{CD}. The following arguments, especially  Proposition \ref{Seperate} and Proposition \ref{Induct}, rely heavily on the condition $\op{char}\Bbbk=\op{char}\mathbb{F}_q$. While \cite[Lemma 2.4]{CD}, one of the key steps of arguments in \cite{CD}, relies heavily on the condition $\op{char}\Bbbk\neq\op{char}\mathbb{F}_q$. \cite[Lemma 2.4]{CD} says that for any ${\bf T}$-fixed nonzero element $\eta$ in a $\Bbbk{\bf G}$-module $M$, we have $\Bbbk{\bf G}\eta=\Bbbk{\bf G}\underline{U_q}\eta$ if $\op{char}\Bbbk\ne\op{char}\mathbb{F}_q$. However, this does not hold when $\op{char}\Bbbk=\op{char}\mathbb{F}_q$. For example, let $\Bbbk=\bar{\mathbb{F}}_q$, $M=\mathbb{M}(\op{tr})$, and $\eta={\bf 1}_{\op{tr}}\in M$. Then it is clear that $\Bbbk{\bf G} {\bf 1}_{\op{tr}}=\mathbb{M}(\op{tr})$, while $\Bbbk{\bf G}\underline{U_q} {\bf 1}_{\op{tr}}=0$ since $u {\bf 1}_{\op{tr}}= {\bf 1}_{\op{tr}}$ for any $u\in U_q$ and $\op{char}\Bbbk=\op{char}\mathbb{F}_q$.
Therefore, we cannot apply \cite[Lemma 2.4]{CD} to prove Theorem \ref{main} when $\op{char}\Bbbk=\op{char}\mathbb{F}_q$. So in this paper we use new ideas and techniques to deal with the defining characteristic case. Firstly we list two key technical results (Proposition \ref{Seperate} and Proposition \ref{Induct} below) used in the proof of Theorem \ref{main}.

\bigskip
For each nonempty subset $Y$ of $Y^J$, set $\Phi_Y=\bigcup_{w\in Y}\Phi_{w_Jw^{-1}}^-$. We fix a linear order on $\Phi_{Y^J}$ such that
$\Phi_{Y^J}=\{\beta_1,\cdots,\beta_m\}$ with $\op{ht}(\beta_1)\ge\cdots\ge\op{ht}(\beta_m)$, and assume that the linear order of each $\Phi_Y$ (In particular, each $\Phi_{w_Jw^{-1}}$) is inherited from $\Phi_{Y^J}$.

Let $a,b\in\mathbb{N}$ such that $a|b $. For each $w\in Y^J$, write $\Phi_{w_Jw^{-1}}^-=\{\gamma_1,\cdots,\gamma_t\}$ with respect to the above order (In particular $\op{ht}(\gamma_1)\geq\cdots\geq\op{ht}(\gamma_t)$). For such $a,b,w$, and $0\leq d\leq t$, set
$$\Theta(w,d,b,a):=\underline{U_{\gamma_1,q^b}}\cdots\underline{U_{\gamma_d,q^b}}\cdot
\underline{U_{\gamma_{d+1},q^a}}\cdots\underline{U_{\gamma_t,q^a}}.$$
With the above notations, we have the following proposition.

\begin{Prop}\label{Seperate}
Assume that $\op{char}\Bbbk=\op{char}\mathbb{F}_q$, and $M$ is a nonzero $\Bbbk{\bf G}$-module. Let $Y$ be a nonempty subset of $Y^J$ and write $\Phi_Y=\{\alpha_1,\cdots,\alpha_n\}$ with respect to the above order.
Let $d\in \mathbb{Z}_{\geq0}$ such that $\{\alpha_1,\dots,\alpha_d\}\subset\bigcap_{w\in Y}\Phi_{w_Jw^{-1}}^-$.
If $$0\neq\xi_d:=\sum_{w\in Y}a_w\Theta(w,d,b,a){w}C_J\in M$$ for $a,b\in \mathbb{N}$ such that $a\neq b$ and $a|b$ $($all $a_w\in\Bbbk$ here are nonzero$)$, then $\underline{U_{w_Jw^{-1},q^c}}{w}C_J\in M$ for some $w\in Y^J$ and $c\in \mathbb{N}$.
\end{Prop}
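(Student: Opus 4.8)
The plan is to induct on $d$, decreasing it one step at a time, until we reach $d=0$, at which point $\xi_0$ is a linear combination of terms $\Theta(w,0,b,a)wC_J = \underline{U_{w_Jw^{-1},q^a}}\,wC_J$ with distinct $w\in Y$, and then to isolate a single such term. The key point driving the induction is the following elementary observation about the "building block" sums $\underline{U_{\gamma,q^b}}$ versus $\underline{U_{\gamma,q^a}}$: since $a\mid b$, the finite group $U_{\gamma,q^a}$ is a subgroup of $U_{\gamma,q^b}$, so $\underline{U_{\gamma,q^b}} = \underline{U_{\gamma,q^b}}\cdot\frac{1}{|U_{\gamma,q^a}|}$... no — in defining characteristic we cannot divide. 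Instead I would use that $\underline{U_{\gamma,q^b}}\cdot\underline{U_{\gamma,q^a}} = 0$ (it is $\underline{U_{\gamma,q^b}}$ times $\underline{U_{\gamma,q^a}}$, and $\op{char}\Bbbk$ divides $|U_{\gamma,q^a}|=q^a$, so $\underline{U_{\gamma,q^a}}$ annihilates anything $U_{\gamma,q^a}$-invariant — in particular $\underline{U_{\gamma,q^b}}$, which is fixed under right multiplication by the subgroup $U_{\gamma,q^a}$). This is the mechanism that will let me "peel off" the leading factor.

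\textbf{Step 1: Peeling off $\alpha_d$.} Fix $\gamma=\alpha_d$, the lowest-height root among $\alpha_1,\dots,\alpha_d$ in our order, so that in each $w\in Y$ it occupies the $d$-th slot of $\Phi_{w_Jw^{-1}}^-$ (this is why we need $\{\alpha_1,\dots,\alpha_d\}\subset\bigcap_{w\in Y}\Phi_{w_Jw^{-1}}^-$ and the common ordering). Write each $\Theta(w,d,b,a) = \Theta'_w\cdot\underline{U_{\gamma,q^b}}\cdot\underline{U_{\gamma,q^a}}\cdots$ where $\Theta'_w$ collects the first $d-1$ factors. I would left-multiply $\xi_d$ by a suitably chosen element of $\Bbbk{\bf G}$ that converts the leading block $\underline{U_{\gamma,q^b}}$ into $\underline{U_{\gamma,q^a}}$ without disturbing the factors to its left; concretely, since all roots $\alpha_1,\dots,\alpha_{d-1}$ have height $\ge\op{ht}(\gamma)$, conjugating/acting by an element of the torus $T_{q^c}$ for appropriate $c$ (or by $\dot w$ for a suitable $w$) rescales the root-subgroup parameters and can be arranged so that the product $\underline{U_{\gamma,q^b}}$ gets replaced by a scalar multiple of $\underline{U_{\gamma,q^a}}$ up to error terms lying in strictly "smaller" $\Theta$'s. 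After this operation the new element, call it $\xi_{d-1}$, is (up to nonzero scalars on each surviving term) of the form $\sum_{w\in Y'}a'_w\Theta(w,d-1,b,a)wC_J$ for some nonempty $Y'\subseteq Y$; I must check it is nonzero, which follows from the linear independence in Proposition~\ref{basis}/\ref{YJCJ} because the distinct $w$'s produce linearly independent basis vectors and the leading torus-weight computation picks out a nonzero coefficient. Iterating Step 1 down to $d=0$ gives a nonzero $\xi_0 = \sum_{w\in Y''}a''_w\,\underline{U_{w_Jw^{-1},q^a}}\,wC_J$.

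\textbf{Step 2: Isolating one term.} Now I would separate the remaining distinct Weyl group elements $w\in Y''\subseteq Y^J$. Using Lemma~\ref{xsty} and Corollary~\ref{UwJ} (which guarantees, for $w\in Y^J$ with $sw\in Y^J$, $sw>w$, that ${\bf U}_{w_Jw^{-1}}\ne({\bf U}_{w_Jw^{-1}})^s$, so conjugation by $s$ genuinely changes the root set), I can apply a product of simple-reflection operators to $\xi_0$ that kills all but one of the terms: acting by $\dot s$ sends $\underline{U_{w_Jw^{-1},q^a}}wC_J$ to something governed by whether $sw$ is longer or shorter and whether the relevant root stays positive, and by choosing a reduced word adapted to a maximal-length $w_0\in Y''$ one drives every other $w$ into a position where Lemma~\ref{xsty}(2) or a vanishing $\underline{\ \cdot\ }^2$ makes its contribution disappear, while the surviving term remains of the shape $\underline{U_{w_Jw^{-1},q^c}}wC_J$ for some $w\in Y^J$ and some $c\in\mathbb N$ (the $c$ may change from $a$ because of conjugations by elements of $N_{\bf G}({\bf T})$ that rescale root parameters by Frobenius-twisted amounts). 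Again nonvanishing at each stage is checked against the basis of Proposition~\ref{YJCJ}.

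\textbf{Main obstacle.} The delicate part is Step 1: producing, in \emph{defining} characteristic, an element of $\Bbbk{\bf G}$ that replaces the leading block $\underline{U_{\gamma,q^b}}$ by $\underline{U_{\gamma,q^a}}$ \emph{cleanly}, i.e.\ controlling the lower-order "commutator" error terms that the commutator relations (e)\ and Lemma~\ref{xsty}(1) inevitably generate, and ensuring those errors land in $\Theta$'s that are strictly smaller in the height order so that they do not interfere with the leading term's linear independence. Getting the bookkeeping of the linear order on $\Phi_{Y^J}$ to cooperate — so that "peeling the lowest of $\alpha_1,\dots,\alpha_d$" is consistent across all $w\in Y$ simultaneously — is where the careful choice $\op{ht}(\beta_1)\ge\cdots\ge\op{ht}(\beta_m)$ earns its keep, and I expect the bulk of the technical work to be verifying that these error terms are harmless.
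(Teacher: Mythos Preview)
Your proposal has a genuine gap, rooted in a misreading of the definition of $\Theta(w,d,b,a)$. You write each $\Theta(w,d,b,a)$ as $\Theta'_w\cdot\underline{U_{\gamma,q^b}}\cdot\underline{U_{\gamma,q^a}}\cdots$, with the \emph{same} root $\gamma=\alpha_d$ appearing in consecutive slots at levels $q^b$ and $q^a$. But in the actual definition the roots $\gamma_1,\dots,\gamma_t$ are distinct: the $d$-th factor is $\underline{U_{\gamma_d,q^b}}$ and the $(d{+}1)$-st is $\underline{U_{\gamma_{d+1},q^a}}$ for a \emph{different} root. So there is no ``$\underline{U_{\gamma,q^b}}\cdot\underline{U_{\gamma,q^a}}$'' product to exploit, and the annihilation trick you describe at the start of Step~1 does not apply to $\xi_d$ as written.

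More seriously, your induction runs in the wrong direction. You try to \emph{decrease} $d$ by converting a $q^b$-block to a $q^a$-block, and you acknowledge you have no clear mechanism for this (torus conjugation rescales root parameters but does not change the finite level $q^a$ versus $q^b$). The paper instead \emph{increases} $d$: letting $I_i$ be coset representatives of $U_{\alpha_i,q^a}$ in $U_{\alpha_i,q^b}$, one has $\underline{I_i}\cdot\underline{U_{\alpha_i,q^a}}=\underline{U_{\alpha_i,q^b}}$, which upgrades the $(d{+}1)$-st factor from level $q^a$ to $q^b$. The crucial payoff is that the induction variable is $|Y|$, not $d$: one pushes $d$ up until hitting the first root $\alpha_{d+l}$ that lies in $\Phi_{w_Jw^{-1}}^-$ for some $w\in Y$ but not all. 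For those $w$ missing $\alpha_{d+l}$, Lemma~\ref{key}(ii) shows that multiplying by $\underline{I_{d+l}}$ just multiplies the term by $q^{b-a}=0$; for the others, Lemma~\ref{key}(i) gives $\Theta(w,d+l,b,a)wC_J$. Thus $|Y|$ strictly drops while the hypotheses are preserved. The base case $|Y|=1$ is then handled not by Weyl-group operations as in your Step~2, but by taking $U_{q^b}$-invariants (\cite[Proposition 26]{Se}) inside $\Bbbk U_{q^b}\cdot\Theta(w,d,b,a)wC_J$, which forces the invariant to be a scalar multiple of $\underline{U_{w_Jw^{-1},q^b}}wC_J$. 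Your Step~2 machinery (simple reflections, Corollary~\ref{UwJ}) belongs to the proof of Proposition~\ref{Induct}, not here.
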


\begin{Prop}\label{Induct}
 Assume that $\op{char}\Bbbk=\op{char}\mathbb{F}_q$, and $M$ is a nonzero $\Bbbk{\bf G}$-module. If $\underline{U_{w_Jw^{-1}s,q^a}}{s}{w}C_J\in M$ for some $a\in \mathbb{N}$, where $sw\in Y^J$ and $sw>w$ $($this implies $w\in Y^J$$)$, then $\underline{U_{w_Jw^{-1},q^b}}{w}C_J\in M$ for some $b\in \mathbb{N}$.
\end{Prop}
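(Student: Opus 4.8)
The plan is to take the given element and push it one step down the poset $Y^J$ by acting with the reflection $s=s_i$, and then to clean up the lower-order remainder. Set $v:=sw$; by hypothesis $v\in Y^J$, $v>w$ and $w\in Y^J$, so $w=sv<v$. Then the hypothesis reads $\zeta:=\underline{U_{w_Jv^{-1},q^a}}\,v\,C_J\in M$, and I must produce $b$ with $\underline{U_{w_Jw^{-1},q^b}}\,w\,C_J\in M$. I would proceed in three steps.

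\emph{Step 1: a root identity and a factorisation.} Since $v,w\in W^J$ one has $\ell(w_Jv^{-1})=\ell(v)+\ell(w_J)$ and $\ell(w_Jw^{-1})=\ell(w)+\ell(w_J)=\ell(w_Jv^{-1})-1$, with $w_Jw^{-1}=(w_Jv^{-1})s$. A direct computation with the root sets $\Phi_{(\cdot)}^-$ then gives $\alpha_i\in\Phi_{w_Jv^{-1}}^-$, $\alpha_i\notin\Phi_{w_Jw^{-1}}^-$, and
$$\Phi_{w_Jv^{-1}}^-=\{\alpha_i\}\ \sqcup\ s(\Phi_{w_Jw^{-1}}^-),\qquad s(\Phi_{w_Jw^{-1}}^-)\subset\Phi^+.$$
Put $R:=s(\Phi_{w_Jw^{-1}}^-)=\Phi_{w_Jv^{-1}}^-\setminus\{\alpha_i\}$, a closed subset of $\Phi^+$. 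Then ${\bf U}_R:=\prod_{\gamma\in R}{\bf U}_\gamma=\dot{s}\,{\bf U}_{w_Jw^{-1}}\,\dot{s}^{-1}$ is a subgroup of ${\bf U}_{w_Jv^{-1}}$, and ${\bf U}_{w_Jv^{-1}}={\bf U}_R{\bf U}_{\alpha_i}={\bf U}_{\alpha_i}{\bf U}_R$ with unique factorisations. Passing to $F^a$-fixed points and summing in $\Bbbk{\bf G}$ gives
$$\underline{U_{w_Jv^{-1},q^a}}=\dot{s}\,\underline{U_{w_Jw^{-1},q^a}}\,\dot{s}^{-1}\cdot\underline{U_{\alpha_i,q^a}}=\underline{U_{\alpha_i,q^a}}\cdot\dot{s}\,\underline{U_{w_Jw^{-1},q^a}}\,\dot{s}^{-1}.$$

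\emph{Step 2: apply $\dot{s}$ and Lemma~\ref{xsty}.} Since $\dot{s}\dot{v}$ represents $sv=w$ and ${\bf T}$ acts trivially on $C_J$, applying $\dot{s}$ to $\zeta$ and inserting Step~1 gives $\dot{s}\,\zeta=\underline{U_{w_Jw^{-1},q^a}}\cdot\underline{U_{-\alpha_i,q^a}}\cdot w\,C_J\in M$. Now $sw=v>w$ while $ww_J<sww_J$, so Lemma~\ref{xsty}(2) gives $\dot{s}\,u\,w\,C_J=v\,C_J$ for $u\in{\bf U}^*_{\alpha_i}$, and $sv=w<v$ with Lemma~\ref{xsty}(3) gives $\dot{s}\,u\,v\,C_J=f_i(u)\,v\,C_J$; since $f_i$ restricts to a bijection of $U_{\alpha_i,q^a}\setminus\{1\}$, a short computation yields $\underline{U_{-\alpha_i,q^a}}\,w\,C_J=w\,C_J+\sum_{1\neq u\in U_{\alpha_i,q^a}}u\,v\,C_J$, and hence
$$\dot{s}\,\zeta=\underline{U_{w_Jw^{-1},q^a}}\,w\,C_J\ +\ \underline{U_{w_Jw^{-1},q^a}}\sum_{1\neq u\in U_{\alpha_i,q^a}}u\,v\,C_J\ \in M.$$
The first summand is exactly the element I want (with $b=a$), so the task is to discard the second; by Proposition~\ref{YJCJ} it is a $\Bbbk$-combination of basis vectors of $E_J$ of the shape $u''\,v\,C_J$ with $u''\in{\bf U}_{w_Jv^{-1}}$, so it lives one level up, at $v$.

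\emph{Step 3: clearing the level-$v$ remainder --- the main obstacle.} Using $\zeta=\underline{U_{R,q^a}}\,v\,C_J+\underline{U_{R,q^a}}\sum_{u\neq1}u\,v\,C_J$ (Step~1) together with conjugation of the displays of Step~2 by $\dot{s}^{\pm1}$, one checks that each of the elements $\underline{U_{w_Jw^{-1},q^a}}\,w\,C_J$, $\ \underline{U_{R,q^a}}\,v\,C_J$ and $\ \underline{U_{R,q^a}}\sum_{1\neq u\in U_{\alpha_i,q^a}}u\,v\,C_J$ lies in $M$ if and only if the others do; and more generally $\underline{U_{w_Jw^{-1},q^b}}\,w\,C_J\in M$ iff $\underline{U_{R,q^b}}\,v\,C_J\in M$. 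Thus the proposition reduces to showing that the ``$\alpha_i$-free part'' $\underline{U_{R,q^b}}\,v\,C_J$ of $\zeta$ lies in $M$ for some multiple $b$ of $a$. This is where the real work is, and where I expect a $b=a$ attempt to run in circles: I would prove it by an inner induction that strips the factor $\underline{U_{\alpha_i,q^a}}$ off $\zeta$ by iterating the moves of Steps~1--2 on the remainder one root subgroup at a time, enlarging $\mathbb{F}_{q^a}$ to a suitable $\mathbb{F}_{q^b}$ as the induction proceeds (this is precisely why the conclusion allows $b\neq a$), exploiting repeatedly that $\underline{U_{\gamma,q^c}}^{\,2}=0$ (valid since $\op{char}\Bbbk=\op{char}\mathbb{F}_q$, and what collapses terms produced at earlier stages), the substitution rules of Lemma~\ref{xsty} for $\dot{s}$ acting on $v\,C_J$ and on $w'\,C_J$, and the closedness of $\Phi_{w_Jv^{-1}}^-$ (so that $\dot{s}$-conjugations and commutator relations never introduce root subgroups outside ${\bf U}_{w_Jv^{-1}}$). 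The delicate point is the bookkeeping: keeping track of which field $\mathbb{F}_{q^c}$ sits on which root subgroup so that the factorisations --- which depend on the height ordering fixed just before Proposition~\ref{Seperate} --- stay compatible and the output has the clean shape $\underline{U_{w_Jw^{-1},q^b}}\,w\,C_J$. Everything before Step~3 is routine manipulation with root subgroups and Lemma~\ref{xsty}.
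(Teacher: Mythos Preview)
Your Steps~1--2 are essentially the same opening move as the paper's proof: apply $\dot{s}$ to $\zeta$, use the factorisation ${\bf U}_{w_Jv^{-1}}={\bf U}_\alpha\,\dot{s}{\bf U}_{w_Jw^{-1}}\dot{s}^{-1}$ and Lemma~\ref{xsty} to obtain an element of $M$ of the form
\[
\underline{U_{w_Jw^{-1},q^a}}\,wC_J \;-\; \underline{(U_{w_Jw^{-1},q^a})^s}\,vC_J
\]
(the paper subtracts $\zeta\in M$ to write the remainder this cleanly; your expression with $\sum_{u\neq1}u\,vC_J$ is a repackaging of the same thing, since $\zeta=\underline{U_{R,q^a}}vC_J+\underline{U_{R,q^a}}\sum_{u\neq1}u\,vC_J$). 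One caveat: your claim that the remainder is supported purely at level $v$ uses that $U'_{w_Jv^{-1}}$ fixes $vC_J$; this is true but you should say why.

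The genuine gap is Step~3. You correctly diagnose that this is ``where the real work is,'' but the plan you sketch (``strip off $\underline{U_{\alpha_i,q^a}}$ by iterating Steps~1--2 one root subgroup at a time'') is not an argument, and iterating the $\dot{s}$-move does go in circles, exactly as you fear. The paper does something quite different and specific. First, Corollary~\ref{UwJ} guarantees ${\bf U}_{w_Jw^{-1}}\ne({\bf U}_{w_Jw^{-1}})^s$, so the set $\{\beta_1,\dots,\beta_n\}=\Phi_{w_Jw^{-1}}^-\cap\Phi_{w_Jv^{-1}}^+$ is nonempty; without this, there is nothing to distinguish the $w$-term from the $v$-term and no way to separate them. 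Second, one picks $\beta_H$ of \emph{maximal height} among the $\beta_j$ and proves the combinatorial fact $\beta_H+\gamma_i\neq\gamma_j$ for all $i,j$ (where $\gamma_j=s(\beta_j)$); this uses $\langle\beta_H,\alpha^\vee\rangle<0$. Third, after first enlarging from $q^a$ to $q^b$ on the high-height $\alpha_i$'s (those with $\op{ht}(\alpha_i)\ge\op{ht}(\beta_H)$), multiplying by coset representatives of $U_{\beta_H,q^a}$ in $U_{\beta_H,q^b}$ kills the $v$-term (it picks up a factor $q^{b-a}=0$ by Lemma~\ref{key}(ii), precisely because of the combinatorial fact) but \emph{not} the $w$-term (by Lemma~\ref{key}(i)). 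Finally, one passes to $U_{q^b}$-invariants to land on $\underline{U_{w_Jw^{-1},q^b}}wC_J$.

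None of these ingredients --- Corollary~\ref{UwJ}, the maximal-height root $\beta_H$, the property $\beta_H+\gamma_i\neq\gamma_j$, or Lemma~\ref{key} --- appears in your proposal, and they are the entire content of the proof. The vague appeal to $\underline{U_{\gamma,q^c}}^{2}=0$ is not how the cancellation is achieved; the actual mechanism is that $U_{\beta_H}$ acts on the $v$-side through $U'_{w_Jv^{-1}}$ modulo high-height corrections already absorbed, producing a scalar factor $q^{b-a}$.
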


\bigskip
\noindent Once Proposition \ref{Seperate} and Proposition \ref{Induct} are proved, we can prove Theorem \ref{main} in the case $\op{char}\Bbbk=\op{char}\mathbb{F}_q$ as follows.

\begin{proof} [{\it Proof of Theorem \ref{main}.}]
For a fixed $J\subset I$, assume that $M$ is a nonzero $\Bbbk{\bf G}$-submodule of $E_J$.
Let $E_{J,q^i}:=\Bbbk G_{q^i}C_J$. Choose a nonzero element $x\in M$.
Then $x\in E_{J,q^a}$ for some $a\in \mathbb{N}$ since $E_J=\bigcup_{i>0}E_{J,q^i}$.

It is clear that $(\Bbbk G_{q^a}x)^{U_{q^a}}\subset(E_{J,q^a})^{U_{q^a}}\subset\bigoplus_{w\in Y^J}\Bbbk\underline{U_{w_Jw^{-1},q^a}}{w}C_J$ by Lemma \ref{YJCJ}. Moreover, $(\Bbbk G_{q^a}x)^{U_{q^a}}\neq0$ by \cite[Proposition 26]{Se}. There exists a nonzero element
\begin{equation}\label{cw}
\xi=\sum_{w\in Y^J}c_w\underline{U_{w_Jw^{-1},q^a}}{w}C_J\in(\Bbbk G_{q^a}x)^{U_{q^a}}\subset M,\quad c_w\in\Bbbk.
\end{equation}

Choose an integer $b\neq a$ and $a|b$. Then $\xi=\sum_{w\in Y^J}c_w\Theta(w,0,b,a){w}C_J$. We apply Proposition \ref{Seperate} to $Y=\{w\in Y^J\mid c_w\ne 0\}$, $d=0$ and $\xi=\xi_d$. Then $\underline{U_{w_Jw^{-1},q^c}}{w}C_J\in M$ for some $w\in Y^J$ and $c\in \mathbb{N}$.
Applying Proposition \ref{Induct} repeatedly, we see that $\underline{U_{w_J,q^m}}C_J\in M$ for some $m \in \mathbb{N}$.

By \cite[Lemma 2]{St}, since $\op{char}\Bbbk=\op{char}\mathbb{F}_q$, we have
$$\sum_{w\in W_J}(-1)^{\ell(w)}{w}\underline{U_{w_J,q^m}}C_J
=\sum_{w\in W_J}q^{m\ell(w)}C_J=C_J\in M$$
which implies that $E_J$ is irreducible. The set $J$ in the above arguments can be any subset of $I$, so all $E_J$ $(J\subset I)$ are irreducible.
\end{proof}

\bigskip
\noindent Therefore, we devote to prove Proposition \ref{Seperate} and Proposition \ref{Induct} in the sequel. In order to prove these two propositions, we need the following technical lemma.

\begin{Lem}\label{key}
Fix $w\in Y^J$ and let $A=\{\alpha_1,\alpha_2,\dots, \alpha_m\}$ and $B=\{\beta_1,\beta_2,\dots, \beta_n\}$ be two disjoint subsets of $\Phi_{w_Jw^{-1}}^-$. Assume that $\sum_il_i\alpha_i\in A$ whenever $\sum_il_i\alpha_i\in\Phi^+$ for some $l_i\in\mathbb{Z}_{\geq0}$.
Let $a,b\in\mathbb{N}$ with $a|b$, and denote
$$\delta:=\underline{U_{\alpha_1,q^b}}\cdots\underline{U_{\alpha_m,q^b}}\cdot
\underline{U_{\beta_1,q^a}}\cdots\underline{U_{\beta_n,q^a}}wC_J.$$

\noindent Then we have

\noindent$\op{(i)}$ Assume that $k\beta_1+\sum_il_i\alpha_i\in A$ whenever $k\beta_1+\sum_il_i\alpha_i\in\Phi^+$ for some $k\in\mathbb{Z}_{>0}$ and $l_i\in\mathbb{Z}_{\geq0}$. Then
$$x\delta=\underline{U_{\alpha_1,q^b}}\cdots\underline{U_{\alpha_m,q^b}}\cdot
x \underline{U_{\beta_1,q^a}}\cdots\underline{U_{\beta_n,q^a}}wC_J$$
for any $x\in U_{\beta_1,q^b}$.

\noindent$\op{(ii)}$ Let $\gamma\in\Phi_{w_Jw^{-1}}^+$. Assume that $k\gamma+\sum_il_i\alpha_i+\sum_im_i\beta_i\in A$ whenever $k\gamma+\sum_il_i\alpha_i+\sum_im_i\beta_i\in\Phi_{w_Jw^{-1}}^-$ for some $k\in\mathbb{N}$ and $l_i,m_i\in\mathbb{Z}_{\geq0}$. Then $y\delta=\delta$ for any $y\in U_{\gamma,q^b}$.
\end{Lem}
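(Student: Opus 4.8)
\textbf{Proof proposal for Lemma \ref{key}.}

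The plan is to prove both (i) and (ix) by reducing them to the commutator relations (e) combined with the three-part Lemma \ref{xsty}, exploiting the crucial feature that $\op{char}\Bbbk=\op{char}\mathbb{F}_q$. The governing principle is: when we move a group element $x$ past a sum of the form $\underline{U_{\alpha,q^c}}$, the commutators $[x,u]$ for $u\in U_{\alpha,q^c}$ land in products of root subgroups $U_{m\alpha+n\beta}$ by (e); if all those roots (with the relevant $\mathbb{F}$-parameters) already appear among the ``blocked'' roots $\alpha_1,\dots,\alpha_m$, then the extra factors are absorbed into the leading $\underline{U_{\alpha_1,q^b}}\cdots\underline{U_{\alpha_m,q^b}}$ since $U_{\alpha_i,q^b}\cdot\underline{U_{\alpha_i,q^b}}=\underline{U_{\alpha_i,q^b}}$. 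The hypothesis ``$\sum_i l_i\alpha_i\in A$ whenever it is a positive root'' is exactly what guarantees the product $\underline{U_{\alpha_1,q^b}}\cdots\underline{U_{\alpha_m,q^b}}$ is itself the group sum over the (closed under addition) set spanned by $A$, so that left-multiplication by any $U_{\alpha_i,q^b}$ (or anything in the group they generate) fixes it; I would record this normalization as a preliminary step.

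For (i): fix $x\in U_{\beta_1,q^b}$ and write $\delta=\underline{U_{\alpha_1,q^b}}\cdots\underline{U_{\alpha_m,q^b}}\cdot\underline{U_{\beta_1,q^a}}\cdot\zeta$ where $\zeta=\underline{U_{\beta_2,q^a}}\cdots\underline{U_{\beta_n,q^a}}wC_J$. I would first move $x$ to the right past $\underline{U_{\beta_1,q^a}}$: since $U_{\beta_1,q^a}\subseteq U_{\beta_1,q^b}$ (because $a|b$) and $x$ normalizes $U_{\beta_1}$, we have $x\,\underline{U_{\beta_1,q^a}}=\underline{U_{\beta_1,q^a}}$ up to an element of $U_{\beta_1,q^b}$ acting — more precisely, translation-invariance of the sum over the subgroup $U_{\beta_1,q^a}$ under the coset action, combined with absorbing the leftover. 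Then I need $x$ past the block $\underline{U_{\alpha_1,q^b}}\cdots\underline{U_{\alpha_m,q^b}}$ on the far left: conjugating $x$ by each $u_i\in U_{\alpha_i,q^b}$ produces, by the commutator relations, factors in $U_{k\beta_1+\sum l_i\alpha_i,q^b}$ for various $k>0$; the hypothesis of (i) says each such root lies in $A$, so those factors get absorbed into the leading block. What remains is $\underline{U_{\alpha_1,q^b}}\cdots\underline{U_{\alpha_m,q^b}}\cdot x\cdot\underline{U_{\beta_1,q^a}}\zeta$, which is the claim. One must be slightly careful about ordering — the commutator products in (e) come in a fixed total order — but since all the new roots lie in $A$ this is bookkeeping, not a genuine obstruction.

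For (ii): here $\gamma\in\Phi_{w_Jw^{-1}}^+$, so $U_{\gamma}$ is one of the root subgroups \emph{not} appearing in $\delta$, and the point is that $y\in U_{\gamma,q^b}$ acts trivially on $\delta$. I would push $y$ rightward through $\underline{U_{\alpha_1,q^b}}\cdots\underline{U_{\alpha_m,q^b}}\cdot\underline{U_{\beta_1,q^a}}\cdots\underline{U_{\beta_n,q^a}}$: each commutator step contributes factors in $U_{k\gamma+\sum l_i\alpha_i+\sum m_i\beta_i}$ for $k\in\mathbb{N}$, $l_i,m_i\ge 0$; such a root is either again a positive root in $\Phi_{w_Jw^{-1}}^+$ (stays to the right of everything, eventually to be dealt with) or lies in $\Phi_{w_Jw^{-1}}^-$, in which case the hypothesis of (ii) forces it into $A$ and it is absorbed into the leading block. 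After clearing all factors, $y$ has been transported to act on $wC_J$ directly; but $y\in U_{\gamma,q^b}\subseteq \dot w^{-1}U\dot w$-type data with $\gamma\in\Phi_{w_Jw^{-1}}^+$ means $wy w^{-1}$ — more precisely the relevant conjugate — lies in ${\bf U}$ or in the parabolic fixing $C_J$, so it acts trivially on $wC_J$ by Lemma \ref{xsty}(2) (the condition $ww_J<s_i w w_J$ translated appropriately). The main obstacle I anticipate is exactly this last step: carefully checking that after all the commutator shuffling, the surviving ``positive-root'' factors together with $y$ itself act trivially on $wC_J$, which requires matching the combinatorial condition $w\in Y^J$ and $\gamma\in\Phi_{w_Jw^{-1}}^+$ to the hypotheses of Lemma \ref{xsty}; the commutator bookkeeping, while tedious, is routine once the normalization step is in place.
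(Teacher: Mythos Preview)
Your argument for (i) is essentially the paper's, though stated less cleanly: the hypothesis says every commutator of $U_{\beta_1}$ with $V_A=\prod_i{\bf U}_{\alpha_i}$ lands back in $V_A$, so $V_{A,q^b}$ is normal in $V_{A,q^b}U_{\beta_1,q^b}$ and hence $x\underline{V_{A,q^b}}=\underline{V_{A,q^b}}x$. Your preliminary step of moving $x$ past $\underline{U_{\beta_1,q^a}}$ is irrelevant and confused---the target has $x$ sitting \emph{between} the $\alpha$-block and the $\beta$-block, so you only need to commute $x$ past the $\alpha$-block.

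For (ii) there is a genuine gap. Your plan is to push $y$ rightward through the whole product and absorb every commutator factor landing in $A$ ``into the leading block.'' But once $y$ has passed the $\alpha$-block and is moving through the $\beta$-sums, any new factor in $U_{\alpha'}$ with $\alpha'\in A$ appears to the \emph{right} of some $\underline{U_{\beta_j,q^a}}$; to absorb it you must commute it back leftward, and that produces further commutators in roots of the form $\alpha'+\sum m_j\beta_j$ which are covered by \emph{neither} the closure hypothesis on $A$ nor the hypothesis of (ii) (the latter requires the $\gamma$-coefficient $k$ to be positive). So the process does not obviously terminate or stay inside the roots you control; ``tedious bookkeeping'' will not close this. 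The paper sidesteps the whole issue: rather than shuffling sums, it fixes a single pair $(g_1,g_2)\in V_{A,q^b}\times V_{B,q^a}$, writes $yg_1g_2=\sigma(g_1)\,g_2\,z$ with $\sigma(g_1)\in V_{A,q^b}$ and $z\in{\bf U}'_{w_Jw^{-1}}$ (this uses the hypothesis of (ii) directly), and then proves that for fixed $y,g_2$ the map $g_1\mapsto\sigma(g_1)$ is \emph{injective} via the unique factorization ${\bf U}_{w_Jw^{-1}}\cap{\bf U}'_{w_Jw^{-1}}=\{1\}$. Since $zwC_J=wC_J$, summing over $g_1$ recovers $\delta$ term-by-term. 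That bijectivity argument is the missing idea in your proposal.
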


\begin{proof}
$\op{(i)}$  By  commutator formula and the assumption, it is easy to show  that ${\bf V}_{A}=\displaystyle \prod_{1\leq i\leq m}{\bf U}_{\alpha_i} $ is a normal subgroup of ${\bf V}_A{\bf U}_{\beta_1}$. In particular, $V_{A,q^b}= \displaystyle  \prod_{1\leq i\leq m}U_{\alpha_i,q^b}$ is a normal subgroup of
$V_{A,q^b}U_{\beta_1,q^b}$. Thus, $x$ commutes with $\underline{U_{\alpha_1,q^b}}\cdots\underline{U_{\alpha_m,q^b}}$ which proves (i).

\medskip

\noindent $\op{(ii)}$ By assumption, for any $y\in U_{\gamma,q^b}$, $g_1\in V_{A,q^b}$, and $g_2\in V_{B,q^a}=\prod_{1\leq i\leq n}U_{\beta_i,q^a}$, we have
\begin{equation}\label{==}
yg_1g_2=\sigma(g_1)g_2z,
\end{equation}
where $\sigma(g_1)\in V_{A,q^b}$ and $z\in{\bf U}_{w_Jw^{-1}}'$. We claim that the map $g_1\mapsto\sigma(g_1)$ (for fixed $y$ and $g_2$) is injective. Indeed, assume that $\sigma(g_1)=\sigma(g_1')$ for some $g_1'\in V_{A,q^b}$. Since $yg_1'g_2=\sigma(g_1)g_2z'$ for some $z'\in{\bf U}_{w_Jw^{-1}}'$, we have
\begin{equation}\label{=}
g_2^{-1}g_1^{-1}g_1'g_2=z^{-1}z'.
\end{equation}
It follows from equation (\ref{=}) that $g_2^{-1}g_1^{-1}g_1'g_2\in {\bf U}_{w_Jw^{-1}}\cap{\bf U}_{w_Jw^{-1}}'=\{1\}$, and hence $g_1=g_1'$ which proves the claim. Since $zwD_J=wD_J$ for any $z\in{\bf U}_{w_Jw^{-1}}'$, we have $y\delta=\delta$ for any $y\in U_{\gamma,q^b}$ thanks to equation (\ref{==}) and the injectivity of $\sigma$.
\end{proof}

\noindent With this preparation in hand, we can give
\begin{proof}[Proof of Proposition \ref{Seperate}]
We will prove this lemma by the induction on $|Y|$.
If $|Y|=1$, then $\xi_d=c\Theta(w,d,b,a){w}C_J\in M$ for some $c\in\Bbbk^\times$ and $w\in Y^J$. We consider the $\Bbbk U_{q^b}$-module $N=\Bbbk U_{q^b} \Theta(w,d,b,a){w}C_J\subset M$. Clearly, $N^{U_{q^b}}\neq0$ by \cite[Proposition 26]{Se}. Note that $N^{U_{q^b}}\subseteq(\Bbbk U_{q^b}{w}C_J)^{U_{q^b}}= \Bbbk\underline{U_{w_Jw^{-1},q^b}}{w}C_J$, then $\underline{U_{w_Jw^{-1},q^b}}{w}C_J\in M$.

\smallskip
Assume that $|Y|>1$. Let $I_i$ be a set of left coset representatives of $U_{\alpha_i,q^a}$ in $U_{\alpha_i,q^b}$. Let $l$ be the minimal number such that $\alpha_{d+l}\not\in\Phi_{w_Jw^{-1}}^-$ for some $w\in Y$. Since $\Phi_{w_1}^-\neq\Phi_{w_2}^-$ if $w_1\neq w_2$, such $l$ always exists.

If $w\in Y$ and $\alpha_{d+l}\not\in\Phi_{w_Jw^{-1}}^-$, combining our assumption on the order in each $\Phi_{w_Jw^{-1}}^-$ and Lemma \ref{key} (i) yields
\begin{equation}\label{d+i+1}
\underline{I_{d+i+1}}\Theta(w,d+i,b,a){w}C_J
=\Theta(w,d+i+1,b,a)wC_J
\end{equation}
for all $0\le i<l-1$, and
Lemma \ref{key} (ii) yields
\begin{equation}\label{d+l}
\underline{I_{d+l}}\Theta(w,d+l-1,b,a){w}C_J=q^{b-a}\Theta(w,d+l-1,b,a){w}C_J=0
\end{equation}
since $\op{char}\Bbbk=\op{char}\mathbb{F}_q$ and $b\neq a$.
Thus, combining (\ref{d+i+1}) and (\ref{d+l}) yields
\begin{equation}\label{=0}
\underline{I_{d+l}}\cdots\underline{I_{d+1}}\Theta(w,d,b,a)wC_J=0.
\end{equation}

If $w\in Y$ and $\alpha_{d+l}\in\Phi_{w_Jw^{-1}}^-$, we have
\begin{equation}\label{!=0}
\underline{I_{d+l}}\cdots\underline{I_{d+1}}\Theta(w,d,b,a){w}C_J
=\Theta(w,d+l,b,a){w}C_J
\end{equation}
by Lemma \ref{key} (i). Denote $\xi_{d+l}:=\underline{I_{d+l}}\cdots\underline{I_{d+1}}\xi_d\in M$ and let $Y'$ be the set of $w\in Y^J$ such that the coefficient of $\Theta(w,d+l,b,a){w}C_J$ in $\xi_{d+l}$ is nonzero. Combining (\ref{=0}), (\ref{!=0}), and the minimality of $l$, we see that $\xi_{d+l}\neq 0$ (equivalently, $Y'$ is nonempty) and  $Y'\subsetneq Y$ (In particular $|Y'|<|Y|$). Notice that $\{\alpha_1,\cdots,\alpha_{d+l}\}\subset\bigcap_{w\in Y'}\Phi_{w_Jw^{-1}}^-$, The lemma follows from applying the induction hypothesis to $Y'$ and $\xi_{d+l}$.
\end{proof}

\bigskip

\begin{proof}[Proof of Proposition \ref{Induct}]
We may assume that the $a$ is big enough such that each $w\in W$ has a representative $\dot{w}$ in $G_{q^a}$. Fix a representative $\dot{s}$ of $s=s_{\alpha}$ in $G_{q^a}$.
Since ${\bf U}_{w_Jw^{-1}s}={\bf U}_{\alpha}{({\bf U}_{w_Jw^{-1}})}^s$, we have
$$\dot{s}\underline{U_{w_Jw^{-1}s,q^a}}{s}{w}C_J=\dot{s}\underline{U_{\alpha,q^a}}\dot{s}^{-1}
\underline{U_{w_Jw^{-1},q^a}}wC_J.$$
By Lemma \ref{xsty} (1), the above equation equals to
\begin{align*}
&(\underline{U^*_{\alpha,q^a}}\dot{s}+1)\underline{U_{w_Jw^{-1},q^a}}{w}C_J \\
= & \ \underline{U_{w_Jw^{-1}s,q^a}}{sw}C_J
+\underline{U_{w_Jw^{-1},q^a}}{w}C_J-\underline{({U_{w_Jw^{-1},q^a}})^s}{sw}C_J
\end{align*}
By the assumption $\underline{U_{w_Jw^{-1}s,q^a}}{sw}C_J\in M$, we get $$\underline{U_{w_Jw^{-1},q^a}}{w}C_J-\underline{({U_{w_Jw^{-1},q^a}})^s}{sw}C_J\in M.$$

\smallskip
Let $\Phi_{w_Jw^{-1}}^-\cap\Phi_{w_Jw^{-1}s}^-=\{\alpha_1,\alpha_2,\dots, \alpha_m\}$. By Corollary \ref{UwJ} we have ${\bf U}_{w_Jw^{-1}}\neq{({\bf U}_{w_Jw^{-1}})}^s$, which implies $\Phi_{w_Jw^{-1}}^-\cap\Phi_{w_Jw^{-1}s}^+\neq\emptyset$. Let $\Phi_{w_Jw^{-1}}^-\cap\Phi_{w_Jw^{-1}s}^+=\{\beta_1,\beta_2,\dots, \beta_n\}$. Hence  ${\bf U}_{w_Jw^{-1}}$ is the product of ${\bf U}_{\alpha_i}$ and ${\bf U}_{\beta_j}$ for $i=1,2,\dots,m$ and $j=1,2,\dots, n$. Write $\gamma_i=s(\beta_i)$, then $({\bf U}_{w_Jw^{-1}})^s$ is the product of ${\bf U}_{\alpha_i}$ and ${\bf U}_{\gamma_j}$ for $i=1,2,\dots,m$ and $j=1,2,\dots, n$.

\smallskip
Choose $\beta_{H}\in \{\beta_1,\beta_2,\dots, \beta_n\}$ such that
$$\text{ht}(\beta_{H})=\max \{\text{ht}(\beta_1),\text{ht}(\beta_2),\cdots,\text{ht}(\beta_n)\}.$$
Then the following  property hold: ($\clubsuit$) $\beta_{H}+\gamma_i\neq\gamma_j$ for any $i,j$.

\noindent Indeed, we have
$$w_Jw^{-1}s(\beta_H)=w_Jw^{-1}(\beta_H)-
\langle \beta_H,\alpha^\vee\rangle w_Jw^{-1}(\alpha) \in \Phi^+.$$
Since $w_Jw^{-1}(\beta_H) \in \Phi^-$ and $w_Jw^{-1}\alpha \in \Phi^+$, this forces $\langle \beta_H,\alpha^\vee\rangle <0$. If $\beta_{H}+\gamma_i=\gamma_j$, then
$$\beta_j=s(\beta_{H})+\beta_i=\beta_{H}+\beta_i- \langle \beta_{H},\alpha^\vee\rangle \alpha.$$
It follows that $\op{ht}(\beta_j)>\op{ht}(\beta_H)$ which contradicts to the choice of $\beta_H$. This proves Property ($\clubsuit$).

\medskip
We consider the following set
$$\displaystyle {\bf V}=\underset{\text{ht}(\alpha_i)\geq \text{ht}(\beta_{H})}{\prod}
{\bf U}_{\alpha_i}.$$
It is clear that ${\bf V}$ is a subgroup of ${\bf U}_{w_Jw^{-1}}$ and also a subgroup of $({\bf U}_{w_Jw^{-1}})^s$. Let
$${\bf V}_1=\prod_{{1\le i\le m}\atop{\op{ht}(\alpha_i)<\op{ht}(\beta_H)}}{\bf U}_{\alpha_i}\prod_{1\le i\le n}{\bf U}_{\beta_i},~~{\bf V}_2=\prod_{{1\le i\le m}\atop{\op{ht}(\alpha_i)<\op{ht}(\beta_H)}}{\bf U}_{\alpha_i}\prod_{1\le i\le n}{\bf U}_{\gamma_i}.$$
Then
${\bf U}_{w_Jw^{-1}}={\bf V} {\bf V}_1$ and ${({\bf U}_{w_Jw^{-1}})}^s= {\bf V} {\bf V}_2$. Let $b\in \mathbb{N}$ such that $b\neq a$ and $a|b$ and $I$ be a set of  the left coset representatives of $V_{q^a}$ in $V_{q^b}$, and write
$$\xi := \underline{I}\cdot(\underline{U_{w_Jw^{-1},q^a}}wC_J)\ \ \text{and} \ \
\eta :=\underline{I}\cdot\underline{({U_{w_Jw^{-1},q^a}})^s}swC_J.$$
We set
$${V_{1,q^a}}=\prod_{{1\le i\le m}\atop{\op{ht}(\alpha_i)<\op{ht}(\beta_H)}}{U}_{\alpha_i,q^a}\prod_{1\le i\le n}{U}_{\beta_i,q^a},~~{V}_{2,q^a}=\prod_{{1\le i\le m}\atop{\op{ht}(\alpha_i)<\op{ht}(\beta_H)}}{ U}_{\alpha_i,q^a}\prod_{1\le i\le n}{U}_{\gamma_i,q^a}.$$
It is clear that
$$\xi =\underline{V_{q^b}}\ \underline{V_{1,q^a}}wC_J \ \ \ \text{and} \ \ \
\eta = \underline{V_{q^b}} \ \underline{V_{2,q^a}}swC_J.$$
Since
$\underline{U_{w_Jw^{-1},q^a}}wC_J-\underline{({U_{w_Jw^{-1},q^a}})^s}swC_J\in M$,
we have $\xi-\eta\in M$.

\smallskip
Let $I_H$ be a set of the left coset representatives of $U_{\beta_{H},q^a}$ in $U_{\beta_{H},q^b}$. Using Property ($\clubsuit$) and Lemma \ref{key} (ii), we obtain $\underline{I_H}\eta= q^{b-a}\eta=0$ since $\op{char}\Bbbk=\op{char}\mathbb{F}_q$. Therefore by Lemma \ref{key} (i), $\underline{I_H}\xi\in M$ is nonzero. Let $N=\Bbbk U_{q^b}\underline{I_H}\xi\subset M$. Then $N^{U_{q^b}}\neq0$ by \cite[Proposition 26]{Se}. Since $N^{U_{q^b}}\subset(\Bbbk U_{q^b}{w}C_J)^{U_{q^b}}=\Bbbk\underline{U_{w_Jw^{-1},q^b}}{w}C_J$, we have $\underline{U_{w_Jw^{-1},q^b}}{w}C_J\in M$ which completes the proof.
\end{proof}

\section{Another Proof of Theorem \ref{main}}
In this section, we assume that $\op{char}\Bbbk=\op{char}\mathbb{F}_q$. Let $w_0$ be the longest element in $W$ and write $w_0=v_Jw_Jw_{J'}$ with $\ell(w_0)=\ell(v_J)+\ell(w_J)+\ell(w_{J'})$ (Recall that $J'=I\backslash J$). In this section, we combine Proposition \ref{Induct} and Proposition \ref{SeDJ} below to give an another proof of Theorem \ref{main}.
\begin{Prop}\label{SeDJ} Let $M$ be a nonzero $\Bbbk{\bf G}$-submodule of $E'_J$. Then $$\underline{U_{w_Jv_J^{-1},q^a}}{v_J}D_J \in M$$ for some $a\in \mathbb{N}$.
\end{Prop}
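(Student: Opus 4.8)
I would mimic the structure of the proof of Theorem \ref{main} in Section 4, but now working inside $E'_J\subset\mathbb M_{J'}$ instead of $E_J$, and aiming at the \emph{specific} element $v_J D_J$ rather than some unspecified $wD_J$. Start with a nonzero $x\in M$; since $E'_J=\bigcup_{i>0}\Bbbk G_{q^i}D_J$, we have $x\in E'_{J,q^a}:=\Bbbk G_{q^a}D_J$ for some $a\in\mathbb N$. By \cite[Proposition 26]{Se}, $(\Bbbk G_{q^a}x)^{U_{q^a}}\neq 0$, and by Proposition \ref{basis} this fixed space lies in $\bigoplus_{w\in Y^J}\Bbbk\,\underline{U_{w_Jw^{-1},q^a}}\,wD_J$. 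So there is a nonzero element $\xi=\sum_{w\in Y^J}c_w\,\underline{U_{w_Jw^{-1},q^a}}\,wD_J\in M$. Now the key point is that Proposition \ref{Seperate} and Proposition \ref{Induct} were really proved for the module generators $wC_J$ using only the structural facts in Lemma \ref{key} and the basis statement, and these transfer verbatim to $wD_J$ via the isomorphism $E_J\cong E'_J$ of Proposition \ref{basis} (the relation $z\,w D_J=wD_J$ for $z\in{\bf U}'_{w_Jw^{-1}}$, used in the proof of Lemma \ref{key}, is exactly the analogue used there). Thus, choosing $b\neq a$ with $a\mid b$, I apply Proposition \ref{Seperate} to $Y=\{w:c_w\ne 0\}$, $d=0$, to get $\underline{U_{w_Jw^{-1},q^c}}\,wD_J\in M$ for some $w\in Y^J$ and $c\in\mathbb N$.

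**The main step: climbing to $v_J$.** The difference from Section 4 is the target. In Section 4 one applies Proposition \ref{Induct} repeatedly to land at $\underline{U_{w_J,q^m}}C_J$, i.e.\ at the identity $w=1$ of $Y^J$. Here I instead want to land at $w=v_J$. The point is that $v_J$ is characterised by $w_0=v_Jw_Jw_{J'}$ with lengths adding, so $w_J v_J^{-1}=w_0 w_{J'}^{-1} = w_0 w_{J'}$ (using $w_{J'}^{-1}=w_{J'}$), hence $\Phi^-_{w_Jv_J^{-1}} = \Phi^+\setminus\Phi_{J'}$, which is the \emph{largest} possible such set among $w\in Y^J$ (any $w\in Y^J$ has $w_Jw^{-1}\le w_Jv_J^{-1}$ in the appropriate sense, equivalently $\Phi^-_{w_Jw^{-1}}\subseteq\Phi^-_{w_Jv_J^{-1}}$). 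So Proposition \ref{Induct}, which lets us pass from $\underline{U_{w_Jw^{-1}s,q^a}}\,sw\,C_J\in M$ (with $sw\in Y^J$, $sw>w$) down to $\underline{U_{w_Jw^{-1},q^b}}\,wC_J\in M$, is being run \emph{in reverse}: starting from $\underline{U_{w_Jw^{-1},q^c}}\,wD_J\in M$ for whatever $w$ we obtained, I need to build \emph{up} to $v_J$. Concretely: given $w\in Y^J$ with $w\neq v_J$, I claim there is $s\in S$ with $sw\in Y^J$ and $sw>w$; applying Proposition \ref{Induct} to the pair $(sw,w)$ in reverse requires the element associated to $sw$, not $w$. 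So I should instead run Proposition \ref{Induct} from the $v_J$ end downward: it produces, for \emph{every} $w\le v_J$ reachable by such descents from $v_J$, the implication $\underline{U_{w_Jv_J^{-1}s,q^a}}\cdots\in M\Rightarrow\underline{U_{w_Jv_J^{-1},q^b}}v_JD_J\in M$. Thus the real task is: show that any $w\in Y^J$ can be connected to $v_J$ by a chain $w=w_0<w_1<\cdots<w_r=v_J$ in $Y^J$ with $w_{i+1}=s_iw_i$, $s_i\in S$; then Proposition \ref{Induct} applied along this chain (from $w$ up to $v_J$) yields $\underline{U_{w_Jv_J^{-1},q^m}}v_JD_J\in M$ for some $m$.

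**Where the obstacle is.** The combinatorial claim that every $w\in Y^J$ admits such an increasing chain to $v_J$ inside $Y^J$ with unit steps is the heart of the matter, and it is where I expect to spend real effort. One needs: (i) $v_J$ is the unique maximal element of $Y^J$; (ii) for $w\in Y^J$ with $w\neq v_J$ there exists $s\in S$ with $sw>w$ and $sw\in Y^J$ — i.e.\ the left weak order (restricted appropriately) on $Y^J$ is connected upward to $v_J$. This should follow from the definitions: $Y^J=\{w\in W^J\mid\mathscr R(ww_J)=J\}$, together with standard facts about $W^J$, the exchange condition, and the identity $w_Jv_J^{-1}=w_0w_{J'}$. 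I would prove (ii) by taking a reduced word, using that $w\in W^J$ means no right descent in $J$ while $ww_J$ has right descent set exactly $J$, and that as long as $\ell(w)<\ell(v_J)$ some simple left multiplication keeps us in $W^J$ and preserves the $\mathscr R(ww_J)=J$ condition; the hypothesis ``$sw\in Y^J$ and $sw>w$'' of Proposition \ref{Induct} is precisely what Corollary \ref{UwJ} and Lemma \ref{Uw} are built to feed on, so the machinery is already in place. Once the chain exists, the rest is a finite induction invoking Proposition \ref{Induct} at each step, exactly as in the last line of the proof of Theorem \ref{main}, giving $\underline{U_{w_Jv_J^{-1},q^a}}v_JD_J\in M$ as claimed.
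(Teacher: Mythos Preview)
Your proposal differs substantially from the paper's proof, and it contains a genuine gap.

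The central problem is the direction of Proposition~\ref{Induct}. That proposition is a \emph{descent}: from the element attached to $sw$ (with $sw>w$, $sw\in Y^J$) it produces the element attached to $w$. You acknowledge this (``applying Proposition~\ref{Induct} to the pair $(sw,w)$ in reverse requires the element associated to $sw$, not $w$''), yet you then claim that ``Proposition~\ref{Induct} applied along this chain (from $w$ up to $v_J$)'' yields the element at $v_J$. It does not: along a chain $w=w_0<w_1<\cdots<w_r=v_J$, Proposition~\ref{Induct} only lets you pass from $w_r$ down to $w_{r-1}$ down to $\ldots$ down to $w_0$. You possess the element at $w_0$ and want the element at $w_r$; the proposition, as stated and proved, gives no such implication. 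An implication cannot be ``run in reverse'' without separately establishing its converse.

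It happens that the upward step you actually need is much easier than Proposition~\ref{Induct} and is hidden in the first displayed identity of its proof: since ${\bf U}_{w_Jw^{-1}s}={\bf U}_\alpha({\bf U}_{w_Jw^{-1}})^s$, left-multiplying $\underline{U_{w_Jw^{-1},q^a}}\,wD_J$ by $\underline{U_{\alpha,q^a}}\,\dot s^{-1}$ yields $\underline{U_{w_Jw^{-1}s,q^a}}\,(sw)D_J$ directly. So your strategy is salvageable, but not by the mechanism you invoke. You would still owe the combinatorial claim that every $w\in Y^J$ is connected upward to $v_J$ inside $Y^J$ by left multiplications by simple reflections, which you leave as an expectation rather than a proof. (Your side remark that $\Phi^-_{w_Jv_J^{-1}}=\Phi^+\setminus\Phi_{J'}$ is also off by the diagram automorphism $\sigma$; one has $w_Jv_J^{-1}=w_{J'}w_0$ and hence $\Phi^-_{w_Jv_J^{-1}}=\Phi^+\setminus\Phi^+_{\sigma J'}$.)

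The paper's proof takes an entirely different route and avoids both Proposition~\ref{Seperate} and any chain combinatorics. Using Carter--Lusztig's description of $\op{Soc}_{G_{q^a}}\Bbbk G_{q^a}{\bf 1}_{\op{tr}}$ (Lemma~\ref{simple}), it shows that the $\Bbbk G_{q^a}$-socle of $E'_{J,q^a}$ is simple and generated by $\varphi(f^{\sigma J'}_{q^a})=(-1)^{\ell(w_J)}\underline{U_{w_Jv_J^{-1},q^a}}\,v_JD_J$ (Lemma~\ref{socEJ}). Since any nonzero $\Bbbk G_{q^a}$-submodule contains the socle, Proposition~\ref{SeDJ} follows immediately. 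This is precisely the point of Section~5: Proposition~\ref{SeDJ} is meant to \emph{replace} Proposition~\ref{Seperate} in an alternative proof of Theorem~\ref{main}; a proof of Proposition~\ref{SeDJ} that itself invokes Proposition~\ref{Seperate} defeats that purpose.
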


To prove this, we make some preparation. Following \cite[Proposition 3.16]{CL},  for any $a\in \mathbb{N}$ and $w\in W$ there is a $T_w\in\op{End}_{\Bbbk G_{q^a}}(\Bbbk G_{q^a}{\bf 1}_{\op{tr}})$ such that $T_w{\bf 1}_{\op{tr}}=\underline{U_{w,q^a}}{w^{-1}}{\bf 1}_{\op{tr}}$. For any $J\subset I$, denote
$$f^J_{q^a}=\sum\limits_{w\in w_0W_J}T_w{\bf 1}_{\op{tr}}=\sum\limits_{w\in w_0W_J}\underline{U_{w,q^a}}{w^{-1}}{\bf 1}_{\op{tr}}.$$
Combining \cite[Theorem 7.1]{CL}, \cite[Theorem 7.4]{CL}, and \cite[Corollary 7.5]{CL} yields
\begin{Lem}\label{simple}
The map $J\mapsto\Bbbk G_{q^a}f^J_{q^a}$ is a bijection between the subsets of $I$ and the irreducible summands of $\op{Soc}_{G_{q^a}}\Bbbk G_{q^a}{\bf 1}_{\op{tr}}$. Moreover, the stablizer of the space $\Bbbk f^J_{q^a}$ in $G_{q^a}$ is $P_{J,q^a}$, and $\Bbbk f^J_{q^a}$ is the unique 1-dimensional $U_{q^a}$-invariant space in $\Bbbk G_{q^a}f^J_{q^a}$.
\end{Lem}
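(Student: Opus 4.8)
The statement is a translation, into the notation of the present paper, of Carter and Lusztig's analysis of the permutation module $\op{Ind}_B^G\op{tr}$ in defining characteristic, so the plan is to fix the dictionary carefully and then quote \cite[Theorem 7.1, Theorem 7.4, Corollary 7.5]{CL}. Since $\op{char}\Bbbk=\op{char}\mathbb{F}_q=p$, the finite group $G_{q^a}$ has a split $BN$-pair of characteristic $p$ with Borel $B_{q^a}$, unipotent radical $U_{q^a}$ and torus $T_{q^a}$, and $\Bbbk G_{q^a}{\bf 1}_{\op{tr}}\cong\op{Ind}_{B_{q^a}}^{G_{q^a}}\op{tr}$. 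By \cite[Proposition 3.16]{CL} the algebra $\mathcal H_{q^a}:=\op{End}_{\Bbbk G_{q^a}}(\Bbbk G_{q^a}{\bf 1}_{\op{tr}})$ has $\Bbbk$-basis $\{T_w\mid w\in W\}$ with $T_w{\bf 1}_{\op{tr}}=\underline{U_{w,q^a}}\,\dot w^{-1}{\bf 1}_{\op{tr}}$; moreover each $T_w{\bf 1}_{\op{tr}}$ lies in $(\Bbbk G_{q^a}{\bf 1}_{\op{tr}})^{B_{q^a}}$ (conjugation by $T_{q^a}$ rescales the factors of $U_{w,q^a}$ and $\dot w t\dot w^{-1}\in T_{q^a}\subset B_{q^a}$ fixes ${\bf 1}_{\op{tr}}$), so $\phi\mapsto\phi({\bf 1}_{\op{tr}})$ identifies $\mathcal H_{q^a}$ $\Bbbk$-linearly with $(\Bbbk G_{q^a}{\bf 1}_{\op{tr}})^{B_{q^a}}=(\Bbbk G_{q^a}{\bf 1}_{\op{tr}})^{U_{q^a}}$. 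Under this identification $f^J_{q^a}$ is the image of $\sum_{w\in w_0W_J}T_w$, and in particular $f^J_{q^a}$ is fixed by $B_{q^a}$.

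Next I would invoke the two structural theorems of \cite{CL}: \cite[Theorem 7.1]{CL} gives that $\Bbbk G_{q^a}f^J_{q^a}$ is an irreducible $\Bbbk G_{q^a}$-module for every $J\subseteq I$, and \cite[Theorem 7.4]{CL} gives that $\op{Soc}_{G_{q^a}}\Bbbk G_{q^a}{\bf 1}_{\op{tr}}=\bigoplus_{J\subseteq I}\Bbbk G_{q^a}f^J_{q^a}$ with the summands pairwise non-isomorphic. Together these say exactly that $J\mapsto\Bbbk G_{q^a}f^J_{q^a}$ is a bijection from the subsets of $I$ onto the irreducible direct summands of the socle; injectivity is the pairwise non-isomorphism, the same phenomenon recorded for $\Bbbk=\bar{\mathbb{F}}_q$ in \cite[Proposition 4.5]{YY} ($\op{Ind}_B^G\op{tr}$ is a direct sum of $2^r$ pairwise non-isomorphic indecomposables, each with simple socle).

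For the last two assertions I would combine \cite[Corollary 7.5]{CL} with a short direct check. We already know $B_{q^a}$ fixes $f^J_{q^a}$. The combinatorial point — that a representative $\dot s_i$ of $s_i$ stabilizes the line $\Bbbk f^J_{q^a}$ for every $i\in J$ — is part of \cite[Corollary 7.5]{CL}; one verifies it from the left action of $\dot s_i$ on the $U_{q^a}$-fixed subspace together with the length factorization $w_0W_J=w_0\cdot W_J$ (this is where the specialization of $\mathcal H_{q^a}$ at $q^a\equiv0\bmod p$ enters). Hence $P_{J,q^a}$, being generated by $B_{q^a}$ and such representatives, stabilizes $\Bbbk f^J_{q^a}$; that it is the full stabilizer is again \cite[Corollary 7.5]{CL}. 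Finally, since $\Bbbk G_{q^a}f^J_{q^a}$ is irreducible and $G_{q^a}$ has a split $BN$-pair of characteristic $p$, its $U_{q^a}$-fixed subspace is one-dimensional (a standard fact of defining-characteristic highest-weight theory), and $\Bbbk f^J_{q^a}$ is such a line, so it is the unique one.

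There is no genuine difficulty here beyond bookkeeping, and the care required lies in two places. First, one must confirm that the generator of the $J$-th socle summand as it appears in \cite{CL} — written there with that paper's own operators and weight conventions, and for a fixed Chevalley group — agrees up to a nonzero scalar with $f^J_{q^a}=\sum_{w\in w_0W_J}\underline{U_{w,q^a}}\,\dot w^{-1}{\bf 1}_{\op{tr}}$ for every power $q^a$, keeping track of normalizations and signs in the degeneration of the Iwahori--Hecke algebra at $q^a\equiv0\bmod p$ (e.g.\ whether to use $\sum_{w\in w_0W_J}T_w$ or $\sum_{v\in W_J}(-1)^{\ell(v)}T_{w_0v}$). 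Second, one should note that the results of \cite{CL} apply with $q$ replaced by $q^a$, since they hold for any finite group with split $BN$-pair in the natural characteristic, and that all modules in play are defined over $\mathbb{F}_q$, so the hypothesis that $\Bbbk$ is an arbitrary field of characteristic $p$ (not necessarily algebraically closed) costs nothing. I expect both checks to go through routinely.
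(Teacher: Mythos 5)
Your proposal takes exactly the route the paper does: the paper's entire "proof" of this lemma is the single sentence "Combining \cite[Theorem 7.1]{CL}, \cite[Theorem 7.4]{CL}, and \cite[Corollary 7.5]{CL} yields...", and your write-up is just a more careful unpacking of that citation together with the dictionary between the two notations. The extra bookkeeping you flag (matching the generator $f^J_{q^a}$ with Carter--Lusztig's normalization, and replacing $q$ by $q^a$) is sensible but does not constitute a different argument.
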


Keep the notation ${\bf P}_K$, ${\bf 1}_K$, $\mathbb{M}_K$, $J'$ in the end of Section 3. For any $a\in \mathbb{N}$, let $\mathfrak{f}_{K,q^a}=\sum_{w\in W_K}\underline{U_{w^{-1},q^a}}{w}{\bf 1}_{\op{tr}}\in\mathbb{M}(\op{tr})$. Since $\mathfrak{f}_{K,q^a}$ is $P_{K,q^a}$-invariant and all $uw\mathfrak{f}_{K,q^a}$ $(w\in W^K, u\in U_{w^{-1},q^a})$ are linearly independent, the $\Bbbk G_{q^a}$-module $M_{K,q^a}=\Bbbk G_{q^a}{\bf 1}_K\subset\mathbb{M}_K$ is isomorphic to the $\Bbbk G_{q^a}$-submodule of $\Bbbk G_{q^a}{\bf 1}_{\op{tr}}$ generated by $\mathfrak{f}_{K,q^a}$ (via ${\bf 1}_K\mapsto\mathfrak{f}_{K,q^a}$).  The $\Bbbk G_{q^a}$-module $E_{J,q^a}'=\Bbbk G_{q^a}D_J$ is isomorphic to the submodule of $\Bbbk G_{q^a}{\bf 1}_{\op{tr}}$ generated by the element $\sum_{w\in W_J}(-1)^{\ell(w)}{w}\mathfrak{f}_{J',q^a}$ (via $D_J\mapsto\sum_{w\in W_J}(-1)^{\ell(w)}{w}\mathfrak{f}_{J',q^a}$). We denote $\varphi$ for this isomorphism in the sequel.

Since the conjugation by $w_0$ permutes the simple reflections, this induces a permutation $\sigma$ on $I$. Notice that $W_{\sigma J'}=w_0W_{J'}w_0$. By definition we have
$$
f_{q^a}^{\sigma J'}=\sum_{w\in W_{J'}w_0}\underline{U_{w,q^a}}{w^{-1}}{\bf 1}_{\op{tr}}=\sum_{w\in w_0W_{J'}}\underline{U_{w^{-1},q^a}}{w}{\bf 1}_{\op{tr}}.
$$
The above formula implies
\begin{equation}\label{fqj}
f_{q^a}^{\sigma J'}=\sum_{w\in W_{J'}}\underline{U_{w^{-1}w_Jv_J^{-1},q^a}}{v_J}{w_J}{w}{\bf 1}_{\op{tr}}=\underline{U_{w_Jv_J^{-1},q^a}}{v_J}{w_J}\mathfrak{f}_{J',q^a}.
\end{equation}
By the definition of $\varphi$, we have
\begin{equation}\label{fqj2}
\varphi\left(\sum_{w\in W_J}(-1)^{\ell(w)}\underline{U_{w_Jv_J^{-1},q^a}}v_Jw\mathfrak{f}_{J',q^a}\right)=\underline{U_{w_Jv_J^{-1},q^a}}v_JD_J.
\end{equation}
Assume that $w\lneqq w_J$.  Then there exists a $\gamma\in\Phi^+$ such that $w_Jv_J^{-1}(\gamma)\in \Phi^-$ and $w^{-1}v_J^{-1}(\gamma)\in \Phi^+$ and hence $$\underline{U_{\gamma,q^a}}v_Jw\mathfrak{f}_{J',q^a}=q^av_Jw\mathfrak{f}_{J',q^a}=0.$$
It follows that
\begin{equation}\label{fqj3}
\underline{U_{w_Jv_J^{-1},q^a}}v_Jw\mathfrak{f}_{J',q^a}=0
\end{equation}
if $w\lneqq w_J$. Combining (\ref{fqj}), (\ref{fqj2}), (\ref{fqj3}) yields $$\varphi(f_{q^a}^{\sigma J'})=(-1)^{\ell(w_J)}
\underline{U_{w_Jv_J^{-1},q^a}}{v_J}D_J\in E_{J,q^a}'.$$
\begin{Lem}\label{socEJ}
The $\Bbbk G_{q^a}$-socle of $E_{J,q^a}'$ is simple and generated by $\varphi(f_{q^a}^{\sigma J'})$.
\end{Lem}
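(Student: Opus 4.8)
The plan is to transfer the module-theoretic information about $\Bbbk G_{q^a}{\bf 1}_{\op{tr}}$ recorded in Lemma~\ref{simple} across the isomorphism $\varphi$ to $E'_{J,q^a}$. Recall $E'_{J,q^a}=\Bbbk G_{q^a}D_J$ is identified via $\varphi$ with the $\Bbbk G_{q^a}$-submodule $N$ of $\Bbbk G_{q^a}{\bf 1}_{\op{tr}}$ generated by $\sum_{w\in W_J}(-1)^{\ell(w)}w\mathfrak{f}_{J',q^a}$, and the computation just above shows $\varphi(f_{q^a}^{\sigma J'})$ is (up to sign) the distinguished vector $\underline{U_{w_Jv_J^{-1},q^a}}v_JD_J$, which lies in $E'_{J,q^a}$. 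The key point is that $f_{q^a}^{\sigma J'}$ generates the irreducible summand $\Bbbk G_{q^a}f_{q^a}^{\sigma J'}$ of $\op{Soc}_{G_{q^a}}\Bbbk G_{q^a}{\bf 1}_{\op{tr}}$.

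First I would argue that $\op{Soc}_{G_{q^a}}E'_{J,q^a}$ is contained in $\op{Soc}_{G_{q^a}}\Bbbk G_{q^a}{\bf 1}_{\op{tr}}$ (transported by $\varphi^{-1}$, equivalently: the socle of a submodule is contained in the socle of the ambient module, which is immediate since any simple submodule of $E'_{J,q^a}$ is a simple submodule of $\Bbbk G_{q^a}{\bf 1}_{\op{tr}}$). By Lemma~\ref{simple}, $\op{Soc}_{G_{q^a}}\Bbbk G_{q^a}{\bf 1}_{\op{tr}}=\bigoplus_{K\subset I}\Bbbk G_{q^a}f_{q^a}^K$, a multiplicity-free direct sum of pairwise non-isomorphic simples. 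Hence $\op{Soc}_{G_{q^a}}E'_{J,q^a}$ is the direct sum of those $\Bbbk G_{q^a}f_{q^a}^K$ that happen to lie in $N$. So the task reduces to: (a) show $\Bbbk G_{q^a}f_{q^a}^{\sigma J'}\subset N$, which we already have since $\varphi(f_{q^a}^{\sigma J'})\in E'_{J,q^a}$ and $E'_{J,q^a}$ is a $\Bbbk G_{q^a}$-module; and (b) show no other $\Bbbk G_{q^a}f_{q^a}^{K}$ with $K\neq\sigma J'$ lies in $N$.

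For step (b), the cleanest route is a dimension/weight-space count. Since $\Bbbk f_{q^a}^{K}$ is by Lemma~\ref{simple} the unique $1$-dimensional $U_{q^a}$-invariant subspace of $\Bbbk G_{q^a}f_{q^a}^{K}$, the summand $\Bbbk G_{q^a}f_{q^a}^{K}$ is contained in $N$ only if $f_{q^a}^{K}\in N^{U_{q^a}}$. Thus it suffices to determine $N^{U_{q^a}}=(E'_{J,q^a})^{U_{q^a}}$ and check it meets $\bigoplus_{K}\Bbbk f_{q^a}^{K}$ only in $\Bbbk f_{q^a}^{\sigma J'}$. By Proposition~\ref{basis}, $(E'_{J,q^a})^{U_{q^a}}\subset\bigoplus_{w\in Y^J}\Bbbk\,\underline{U_{w_Jw^{-1},q^a}}\,wD_J$; I would show the only $w\in Y^J$ for which $\underline{U_{w_Jw^{-1},q^a}}wD_J$ can be a scalar multiple of some $f_{q^a}^{K}$ (after transport by $\varphi^{-1}$, a $P_{K,q^a}$-stable line) is $w=v_J$, using that the stabilizer of $\Bbbk f_{q^a}^{K}$ is exactly $P_{K,q^a}$ and comparing which parabolic stabilizes $\Bbbk\underline{U_{w_Jw^{-1},q^a}}wD_J$. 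Concretely, $v_J=w_0w_{J'}w_J$ is the element for which $w_Jv_J^{-1}=w_{J'}^{-1}w_0^{-1}$ sweeps out the right root set matching $\sigma J'$, and $v_J\in Y^J$ should be checked directly. Alternatively, and perhaps more robustly, one can avoid classifying $(E'_{J,q^a})^{U_{q^a}}$ entirely: argue that $\op{Soc}_{G_{q^a}}\Bbbk G_{q^a}{\bf 1}_{\op{tr}}$ being multiplicity-free forces $\op{Soc}_{G_{q^a}}E'_{J,q^a}$ to be multiplicity-free, and then show $E'_{J,q^a}$ has simple head (hence, being a submodule of the indecomposable-looking $\mathbb{M}_{J',q^a}$... — actually the quickest finish is to invoke that $E'_{J,q^a}$ already contains the simple $\Bbbk G_{q^a}f_{q^a}^{\sigma J'}$ and to rule out a second summand by noting a second simple summand $\Bbbk G_{q^a}f_{q^a}^{K}$ would force a nonzero $G_{q^a}$-homomorphism $\Bbbk G_{q^a}f_{q^a}^{K}\to\mathbb{M}_{J',q^a}$ with image not meeting $\Bbbk G_{q^a}f_{q^a}^{\sigma J'}$, contradicting the description of $\op{Soc}_{G_{q^a}}\mathbb{M}_{J',q^a}$ inherited from \cite{YY} / Lemma~\ref{simple}).

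The main obstacle I anticipate is step (b): pinning down $(E'_{J,q^a})^{U_{q^a}}$ precisely, or equivalently proving that among the basis vectors $\underline{U_{w_Jw^{-1},q^a}}wD_J$ ($w\in Y^J$) only the one at $w=v_J$ spans a line whose $\varphi^{-1}$-image is $P_{K,q^a}$-stable for some $K$. This requires a careful bookkeeping of root subgroups and the interplay between the left $U_{q^a}$-action and the $D_J$-alternating sum, together with verifying $v_J\in Y^J$ and $w_Jv_J^{-1}$ has the expected root set. Everything else — the containment $\Bbbk G_{q^a}f_{q^a}^{\sigma J'}\subset E'_{J,q^a}$, multiplicity-freeness, and the reduction to $U_{q^a}$-invariants via Lemma~\ref{simple} — is formal. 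Once $\op{Soc}_{G_{q^a}}E'_{J,q^a}=\Bbbk G_{q^a}f_{q^a}^{\sigma J'}$ is established, the lemma follows, and Proposition~\ref{SeDJ} is then immediate: any nonzero $\Bbbk{\bf G}$-submodule $M$ of $E'_J$ meets $E'_{J,q^a}$ nontrivially for some $a$, so contains a nonzero $\Bbbk G_{q^a}$-submodule, hence contains $\op{Soc}_{G_{q^a}}E'_{J,q^a}=\Bbbk G_{q^a}f_{q^a}^{\sigma J'}\ni\varphi(f_{q^a}^{\sigma J'})=\pm\underline{U_{w_Jv_J^{-1},q^a}}v_JD_J$.
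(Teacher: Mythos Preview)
Your reduction is correct and matches the paper: $\op{Soc}_{G_{q^a}}E'_{J,q^a}$ sits inside the multiplicity-free socle $\bigoplus_{K\subset I}\Bbbk G_{q^a}f_{q^a}^{K}$, you already have $f_{q^a}^{\sigma J'}$ in the image, and the problem is to exclude every other $K$. Where your argument diverges from the paper is in step~(b), and both of your proposed routes have gaps. Route~(i) assumes that a $P_{K,q^a}$-stable line in $(E'_{J,q^a})^{U_{q^a}}$ must be spanned by a \emph{single} basis vector $\underline{U_{w_Jw^{-1},q^a}}wD_J$; but all of these vectors are $T_{q^a}$-fixed, so a priori $\varphi(f_{q^a}^{K})$ could be a nontrivial linear combination, and your stabilizer comparison does not immediately apply. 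Route~(ii) is closer to a genuine proof, but it hinges on $\op{Soc}_{G_{q^a}}M_{J',q^a}$ being simple; this is true and essentially in \cite{YY}, yet it is not contained in Lemma~\ref{simple} as stated (which is about $\op{Ind}_B^G 1$, not the parabolic induction), so you would need to either cite the stronger statement or derive it.

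The paper takes a different path for the exclusion step: it bootstraps to the infinite group. Assuming $f_{q^a}^{\sigma K'}$ lies in $\varphi^{-1}(E'_{J,q^a})$, one passes to the $\Bbbk{\bf G}$-submodule it generates inside $E'_J$ and applies the machinery of Section~4 (Proposition~\ref{Induct} and the Steinberg identity) to produce the element $D_K$ inside $E'_J$; this forces $E'_K\subset E'_J$ at the level of $\Bbbk{\bf G}$-modules. One then takes ${\bf T}$-fixed points and observes that $D_K$ is characterized in $(E'_K)^{\bf T}$ by the sign pattern $\dot{s_i}D_K=-D_K\Leftrightarrow i\in K$ together with ${\bf U}_{\alpha_i}D_K=D_K\Leftrightarrow i\notin K$; no element of $(E'_J)^{\bf T}$ satisfies these for $K\neq J$, giving the contradiction. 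The trade-off: your route~(ii), if completed, stays entirely at the finite level and is self-contained modulo a standard fact about parabolic induction, whereas the paper's argument reuses the Section~4 results but mixes the finite and infinite pictures.
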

\begin{proof}
By above discussion and Lemma \ref{simple}, $\op{Soc}_{G_{q^a}}E_{J,q^a}'\supset\Bbbk G_{q^a}\varphi(f_{q^a}^{\sigma J'})$. It remains to show that $\varphi(f_{q^a}^{\sigma K'})\not\in E_{J,q^a}'$ for $K\neq J$ by Lemma \ref{simple}. Suppose that $\varphi(f_{q^a}^{\sigma K'})\in E_{J,q^a}'$, then we have $D_K\in E_{J,q^a}'$ by the same arguments in the previous section, and the above discussion. It follows that $E_K'\subset E_J'$ and taking the ${\bf T}$-fixed points yields the inclusion $\phi:(E_K')^{\bf T}\rightarrow(E_J')^{\bf T}$. But $D_K\in (E_K')^{\bf T}$ is uniquely determined by the following two conditions: (i) $\dot{s_i}D_K=-D_K$ if and only if $i\in K$, and (ii) ${\bf U}_{\alpha_i}D_K=D_K$ if and only if $i\not\in K$. Therefore, $K\neq J$ implies any nonzero element in $(E_J')^{\bf T}$ does not satisfy the above conditions for $D_K$, and such $\phi$ does not exist. This contradiction completes the proof.
\end{proof}

\noindent With the above preparation, we can give
\begin{proof}[Proof of Proposition \ref{SeDJ}]
Let $0\neq x\in M$. Then $x\in M\cap E_{J,q^a}'$ for some $a\in \mathbb{N}$, and hence $$0\neq\Bbbk G_{q^a}x\supset\op{Soc}_{G_{q^a}}E_{J,q^a}'=\Bbbk G_{q^a}\varphi(f^{\sigma J'}_{q^a})$$ by Lemma \ref{socEJ}.
It follows that  $\varphi(f_{q^a}^{\sigma J'})=(-1)^{\ell(w_J)}\underline{U_{w_Jv_J^{-1},q^a}}{v_J}D_J\in M$ which completes the proof.
\end{proof}

Using Proposition \ref{SeDJ} and the same discussion  in Section 4, we can also prove that $E'_J$ is irreducible which implies the irreducibility of  $E_J$ by Proposition \ref{basis}.

\section{Further Developments}
In this section we propose some questions on infinite dimensional abstract representations of reductive groups with Frobenius maps. Any one-dimensional representation $\theta$ of ${\bf T}$ is regarded as a representation of ${\bf B}$ through the homomorphism ${\bf B}\rightarrow{\bf T}$. Let $\mathbb{M}(\theta)=\Bbbk{\bf G}\otimes_{\Bbbk{\bf B}}\theta$. If $\Bbbk=\bar{\mathbb{F}}_q$ and $\theta$ is a rational character of ${\bf T}$, the first author gave in \cite{Chen1} a necessary and sufficient condition for irreducibility of $\mathbb{M}(\theta)$, and found some $\mathbb{M}(\theta)$ with infinitely many irreducible subquotients. The following questions naturally arise.

\smallskip
\noindent $(1)$ Can one give a characteristic free proof of Theorem \ref{main}?

\smallskip
\noindent $(2)$ What is the necessary and sufficient condition for $\mathbb{M}(\theta)$ to have finitely many composition factors? If so, how does $\mathbb{M}(\theta)$ decompose?

\smallskip
\noindent $(3)$ Besides the irreducibility of $E_J$, Proposition \ref{basis} is more interesting in its own right. Now that $E_J$ can be realized as a submodule of a parabolic induced module, can one give a geometric construction of $E_J$ (probably using the geometry of partial flag varieties ${\bf G}/{\bf P}_K$, $K\subset I$)?

\bigskip

{\small
\noindent Xiaoyu Chen

\noindent E-mail: gauss\_1024@126.com

\noindent Department of Mathematics, Shanghai Normal University, 100 Guilin Road, Shanghai 200234,
P. R. China;

\bigskip
\noindent Junbin Dong

\noindent E-mail: dongjunbin1990@126.com

\noindent School of Mathematical Sciences, Tongji University, 1239 Siping Road, Shanghai 200092, P. R. China.
}

\begin{thebibliography}{99}
\bibitem[Car]{Car}
R. W. Carter. Finite Groups of Lie Type: Conjugacy Classes and Complex Characters. New York: John Wiley and Sons, 1985.

\bibitem[CL]{CL}
W. Carter, G. Lusztig. Modular representations of finite groups of Lie type, Proc. London Math. Soc. 32 (1976), 347--384.

\bibitem[Chen1]{Chen1}
Xiaoyu Chen. On the Principal Series Representations of Semisimple Groups with Frobenius Maps. arXiv: 1702. 05686v2.

\bibitem[Chen2]{Chen2}
Xiaoyu Chen. Some Non Quasi-finite Irreducible Representations of
Semisimple Groups with Frobenius Maps. arxiv: 1705. 04845v1.

\bibitem[CD]{CD}
Xiaoyu Chen, Junbin Dong. The Permutation Module on Flag Varieties in Cross Characteristic. Math Zeit, to apper.


\bibitem[Dong]{Dong}
Junbin Dong. Irreducibility of Certain Subquotients of Spherical Principal Series Representations of Reductive Groups with Frobenius Maps. arXiv: 1702.01888v2.


\bibitem[Jan]{Jan}
J. C. Jantzen. Filtrierungen der Darstellungen in der Hauptserie endlicher Chevalley-Gruppen. Proc. London Math. Soc. 49 (3) (1984), 445--482.

\bibitem[KL]{KL}
D. Kazhdan, G. Lusztig. Representations of Coxeter groups and Hecke algebras. Invent Math, 1979, 53: 165--184.

\bibitem[Lu1]{Lu1}
G. Lusztig. Hecke algebras and Jantzen¡¯s generic decomposition patterns. Adv. in
Math. 37 (1980), no. 2, 121--164.

\bibitem[Lu2]{Lu2}
G. Lusztig. Some problems in the representation theory of finite Chevalley groups. The
Santa Cruz Conference on Finite Groups (Univ. California, Santa Cruz, Calif.,
1979), Proc. Sympos. Pure Math., vol. 37, Amer. Math. Soc., Providence, R.I.,
1980, pp. 313--317.

\bibitem[Pil]{Pil}
C. Pillen. Loewy Series for Principal Series Representations of
Finite Chevalley Groups. J. Algebra, 189 (1997), 101--124.

\bibitem[Se]{Se}
J. P. Serre. Linear Representations of Finite Groups. GTM 42. New York-Heidelberg: Springer-Verlag, 1977.

\bibitem[St]{St}
R. Steinberg, Prime power representations of finite linear groups II, Can. J.
Math. 9 (1957), 347--351.

\bibitem[Xi]{Xi}
Nanhua Xi. Some Infinite Dimensional Representations of Reductive Groups With Frobenius Maps. Science China Mathematics, 57 (2014), vol 6, 1109--1120.

\bibitem[Yang]{Yang}
Ruotao Yang. Irreducibility of Infinite Dimensional Steinberg Modules of Reductive Groups with Frobenius Maps. arxiv: 1507.03795v2.

\bibitem[YY]{YY}
Yutaka Yoshii. A generalization of Pillen¡¯s theorem for principal
series modules II. J. Algebra, 429 (2015): 177--191.
\end{thebibliography}
\end{document}